 \theoremstyle{exec}
 \newtheorem{exec}{Counter-Example}[section]
 \newtheorem{definition}[exec]{Definition}
 \newtheorem{rem}[exec]{Remark}
 \newtheorem{theo}[exec]{Theorem}
 \newtheorem{cor}[exec]{Corollary}
 \newtheorem{exe}[exec]{Example}
\newtheorem{lemma}[exec]{Lemma}
\newtheorem{pro}[exec]{Proposition}
\numberwithin{equation}{section} % to get equations numbered
\begin{document}
\title[Conformal deformations preserving
the Finslerian $R$-Einstein criterion]{Conformal deformations preserving
the Finslerian $R$-Einstein criterion} % please provide
                                % an abbreviated title 
\author{Serge Degla}

\address{Ecole Normale Sup\'erieure de Natitingou\\ P.~O. Box 72 \\ 
 Natitingou\\ B\'enin}

 \email{sdegla@imsp-uac.org}

% \thanks{The first author was supported in part by the Deutcher Akademis}

\author{Gilbert Nibaruta}
\address{Universit\'e d'Abomey-Calavi\\ Institut de Math\'ematiques et de Sciences Physiques \\ B.P. 613\\
  Porto-Novo\\ B\'enin}

\email{nibarutag@gmail.com}
% \curraddr{Other Institution\\ Some Other Department\\ P.~O. Box 0002 \\ 12347
%   City\\ Other Country}

\author{L\'eonard Todjihounde}
\address{Universit\'e d'Abomey-Calavi\\ Institut de Math\'ematiques et de Sciences Physiques \\ B.P. 613\\
  Porto-Novo\\ B\'enin}

\email{leonardt@imsp-uac.org}
\begin{abstract}
Given a Finslerian metric $F$ on a $C^4$-manifold, conformal deformations of 
$F$ preserving the $R$-Einstein criterion are presented. In particular, locally conformal invariance between two 
Finslerian $R$-Einstein metrics is characterized.
\end{abstract} 
\subjclass[2010]{53C60, 58B20}

\keywords{Finslerian metrics; Einstein condition; Conformal deformations}

\maketitle

\section{Introduction}\label{Section1}
 Let $F$ be a Finslerian metric on an $n$-dimensional manifold $M$.  
% One of the most important problems in Finslerian geometry is to characterize Einstein metrics. 
One of the most interesting problems is to study the conformal invariance of 
some important geometric quantities associated with $F$ \cite{biBasco}. For example, we have the Liouville transformation that is a conformal 
 deformation which preserves the Finslerian Ricci tensor \cite{biNibaruta1}. It is known that 
 Einstein metrics play an important role in conformal geometry (see \cite{biBesse,biBrinkmann2, biRademacher2}). However, 
 little work has been done on the conformal deformations 
 between Einstein metrics of type Finslerian. 
 In 2013, Zhang Xiao-ling \cite{biZhang} has shown that conformal deformations between Einstein metrics of type Randers
 must be homothetic.
 
 The main objective of the present work is to study the conformal deformations preserving the $R$-Einstein criterion. 
In particular, we classify locally conformal deformations between two Finslerian $R$-Einstein metrics.

 The sections \ref{Section2} and \ref{dhqhdhq} review the main notions on global Finslerian $R$-Einstein spaces introduced in \cite{biNibaruta4}.
 In the Section \ref{adgbjah}, we prove our main results given by the Proposition \ref{cor10'} 
 and, established in the Theorem \ref{klwlefh} and the Corollary \ref{uieleied}. This is concluded by an example given in Section \ref{section3.gg}.
  
   \section{Preliminaries}\label{Section2}
   Throughout this work, %all manifolds are assumed to be connected and, 
   all manifolds and mappings are supposed to be differentiable of classe $C^4$. 
% However, our results presented hold under the differentiability of class  $C^4$.
 Let $M$ be an $n-$dimensional manifold. We denote by $T_xM$ the tangent space at 
 $x\in M$ and by $TM:=\bigcup_{x\in M}T_xM$ the tangent bundle of $M$.
 Set $\mathring{T}M=TM\backslash\{0\}$ and $\pi:TM\longrightarrow M:\pi(x,y)\longmapsto x$ the natural projection. 
 Let $(x^i)_{i=1,...,n}$ be a local coordinate 
 on an open subset $U$ of $M$ and $(x^i,y^i)_{i=1,...,n}$ be the local coordinate 
 on $\pi^{-1}(U)\subset TM$. We have the coordinate bases  
  $\{\frac{\partial}{\partial x^i}\}_{i=1,...,n}$ and $\{dx^i\}_{i=1,...,n}$ respectively, for the tangent bundle 
  $TM$ and cotangent bundle $T^*M$.
  We use Einstein summation convention.%: repeated
% upper and lower indices will automatically be summed unless otherwise will be noted.
% \newpage
  \begin{definition}\label{defi1}
  %A function $F:TM\longrightarrow [0,\infty)$ is called a \textit{Finslerian metric} 
%  on $M$ if :
A \textit{Finslerian metric} 
 on $M$ is a function $F:TM\longrightarrow [0,\infty)$ with the following properties:
 \begin{itemize}
  \item [(1)] $F$ is $C^{\infty}$ on the entire slit tangent bundle $\mathring{T}M$,
  \item [(2)] $F$ is positively $1$-homogeneous on the fibers of $TM$, that is \\
  $\forall c>0,~F(x,cy)=cF(x,y),$
  \item [(3)] the Hessian matrix $(g_{ij}(x,y))_{1\leq i,j\leq n}$ with elements
  \begin{eqnarray}\label{1}
   g_{ij}(x,y):=\frac{1}{2}\frac{\partial^2F^2(x,y)}{\partial y^i\partial y^j}
  \end{eqnarray}
  is positive definite at every point $(x,y)$ of $\mathring{T}M$.
 \end{itemize}
 \end{definition}
\begin{rem}
$
 F(x,y)\neq 0 \text{   for all   } x\in M \text{   and for every   } y\in T_xM\backslash\{0\}.
 $
\end{rem}

Consider the tangent mapping $\pi_*$  of the submersion 
 $\pi:\mathring{T}M \longrightarrow M$.
 The vertical subspace of $T\mathring{T}M$ is defined by 
 $\mathcal{V}:=ker({\pi_*})$ 
 which is locally spanned by the set $\{F\frac{\partial}{\partial y^i}, 1\leq i\leq n\}$,
  on each $\pi^{-1}(U)\subset \mathring{T}M$.
  
  An horizontal subspace $\mathcal{H}$ of $T\mathring{T}M$ is by definition any complementary to 
 $\mathcal{V}$. The bundles $\mathcal{H}$ and $\mathcal{V}$ give a smooth splitting 
 \begin{eqnarray}\label{decomposition}
T\mathring{T}M=\mathcal{H}\oplus\mathcal{V}.  
 \end{eqnarray}
An Ehresmann connection is a selection of a horizontal subspace $\mathcal{H}$ of $T\mathring{T}M$.
As explain in \cite{biRatiba}, $\mathcal{H}$ can be canonically defined from the geodesics equation.

% In the sequel, we consider the following definitions.
\begin{definition}\label{defi2} Let $\pi:\mathring{T}M\longrightarrow M$ be the 
submersion.
\begin{itemize}
 \item [(1)]An Finslerian Ehresmann connection of $\pi$
is the subbundle $\mathcal{H}$ of $T\mathring{T}M$ given by 
 \begin{eqnarray}
  \mathcal{H}:=ker \theta,
 \end{eqnarray}
where $\theta:T\mathring{T}M\longrightarrow \pi^*TM$ is the bundle morphism defined 
by 
    \begin{eqnarray}
   \label{03b}
   \theta=\frac{\partial}{\partial x^i}\otimes \frac{1}{F}(dy^i+N_j^idx^j).
  \end{eqnarray}
\item [(2)]The form $\theta:T\mathring{T}M\longrightarrow \pi^*TM$ induces a linear map 
\begin{eqnarray}
 \theta|_{(x,y)}:T_{(x,y)}\mathring{T}M\longrightarrow T_xM,
\end{eqnarray} 
  for each point $(x,y)\in \mathring{T}M$; where $x=\pi(x,y)$. \\
 The vertical lift of a section $\xi$ of $\pi^*TM$ is a unique section $\textbf{v}(\xi)$ of $T\mathring{T}M$ 
such that for every $(x,y)\in \mathring{T}M$, 
\begin{eqnarray}
 \pi_* (\textbf{v}(\xi))|_{(x,y)}=0_{(x,y)}\text{     and     }\theta (\textbf{v}(\xi))|_{(x,y)}=\xi_{(x,y)}.
\end{eqnarray}
\item [(3)]The differential projection $\pi_*:T\mathring{T}M\longrightarrow \pi^*TM$ induces a linear map 
\begin{eqnarray}
 \pi_*|_{(x,y)}:T_{(x,y)}\mathring{T}M\longrightarrow T_xM,
\end{eqnarray} 
  for each point $(x,y)\in \mathring{T}M$; where $x=\pi(x,y)$. \\
 The horizontal lift of a section $\xi$ of $\pi^*TM$ is a unique section $\textbf{h}(\xi)$ of $T\mathring{T}M$ 
such that for every $(x,y)\in \mathring{T}M$,
\begin{eqnarray}
 \pi_* (\textbf{h}(\xi))|_{(x,y)}=\xi_{(x,y)}\text{     and     }\theta (\textbf{h}(\xi))|_{(x,y)}=0_{(x,y)}.
\end{eqnarray}
\end{itemize}
 \end{definition}

  \begin{definition}\label{defi3b} 
  A tensor field $T$ of type $(p_1,p_2;q)$ on $(M,F)$ is a mapping 
  $$T:\underbrace{\pi^*TM\otimes...\otimes\pi^*TM}_{p_1-times}
  \otimes \underbrace{T\mathring{T}M\otimes...\otimes T\mathring{T}M}_{p_2-times}
  \longrightarrow \bigotimes^q\pi^*TM,$$
  $\big(p_1,p_2$ and $q\in\mathbb{N}\big)$ which is $C^{\infty}(\mathring{T}M,\mathbb{R})$
  -linear in each argument.
 \end{definition}
 \begin{rem}
  In a local chart, $$T=T_{i_1...i_{p_1}j_1...j_{p_2}}^{k_1...k_q}\partial_{k_1}\otimes ... \otimes\partial_{k_q}
  \otimes dx^{i_1}\otimes...\otimes dx^{i_{p_1}}\otimes \varepsilon^{j_{1}}\otimes ... \otimes\varepsilon^{j_{p_2}}$$
  where $\{\partial_{k_r}:=\frac{\partial}{\partial x^{k_r}}\}_{r=1,...,q}$ and $\{\varepsilon^{j_{s}}\}_{s=1,...,p_2}$
  are respectively the basis sections for $\pi^*TM$ and $T\mathring{T}M$.
 \end{rem}

  \begin{exe}
   \begin{itemize}
   \item[(1)] A vector field $X$ on $\mathring{T}M$ is of type $(0,1;0)$.
    \item [(2)]A section $\xi$ of $\pi^*TM$ is of type $(1,0;0)$.
%     \item [(3)] The Cartan tensor $\mathcal{A}$ is of type $(3,0;0)$.
%     \item [(3)]The distinguish section $l$ is of type $(0,0;1)$.
%     \item [(4)] The Finslerian Ehresmann form $\theta$ is of type $(0,1;1)$.
%     \item [(5)] The full curvature $\Phi$ is of type $(2,2;0)$.
%     \item [(6)] The trace-free horizontal Ricci tensor $\textbf{E}_F^H$ is of type $(1,1;0)$.
   \end{itemize}
  \end{exe}
The following lemma defines the Chern connection on $\pi^*TM$.
\begin{lemma}\cite{biRatiba}\label{lem1}
  Let $(M,F)$ be a Finslerian manifold and $g$ its fundamental tensor. % fundamental tensor associated with $F$ and
%   $\pi_*$ the tangent mapping of the submersion 
%  $\pi:\mathring{T}M \longrightarrow M$. 
 There exist a unique linear connection $\nabla$ 
 on the bundle $\pi^*TM$ such that, for all 
 $X,Y\in \chi(\mathring{T}M)$ and for every $\xi, \eta\in\Gamma(\pi^*TM)$, one has the following properties:
 \begin{itemize}
  \item [(i)] Symmetry: $$\nabla_X\pi_*Y-\nabla_Y\pi_*X=\pi_*[X,Y],$$
  \item [(ii)] Almost $g$-compatibility:
  $$X(g(\xi,\eta))=g(\nabla_X\xi,\eta)+g(\xi,\nabla_X\eta)+2\mathcal{A}(\theta(X),\xi,\eta),$$ 
  where $\mathcal{A}:=\frac{F}{2}\frac{\partial g_{ij}}{\partial y^k}dx^i\otimes dx^j\otimes dx^k$ is the Cartan tensor.
 \end{itemize}
 \end{lemma}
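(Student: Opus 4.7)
The plan is to mimic the classical Koszul argument that produces the Levi-Civita connection in the Riemannian case, taking care of the extra Cartan-tensor term that encodes the anisotropy of $g$.

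For uniqueness, I would first reduce the problem to horizontal vector fields on $\mathring{T}M$. Indeed, given the decomposition (\ref{decomposition}), once $\nabla_X$ is known for $X\in\Gamma(\mathcal{H})$ the values on vertical $X$ are constrained by property (ii) via the term $\mathcal{A}(\theta(X),\cdot,\cdot)$, so the essential data is $\nabla_{\mathbf{h}(\xi)}\eta$ for $\xi,\eta\in\Gamma(\pi^*TM)$. Using the horizontal lift $\mathbf{h}$, I would then write down the almost $g$-compatibility identity three times, for the triples $(\mathbf{h}(\xi),\eta,\zeta)$, $(\mathbf{h}(\eta),\zeta,\xi)$ and $(\mathbf{h}(\zeta),\xi,\eta)$, add the first two and subtract the third. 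The symmetry condition (i) applied to pairs of horizontal lifts together with the fact that $\theta(\mathbf{h}(\cdot))=0$ forces the Lie-bracket terms involving $\pi_*[\mathbf{h}(\xi),\mathbf{h}(\eta)]$ to combine into a $\nabla$-free expression, yielding a Koszul-type formula
\begin{equation*}
2g(\nabla_{\mathbf{h}(\xi)}\eta,\zeta)=\mathbf{h}(\xi)\,g(\eta,\zeta)+\mathbf{h}(\eta)\,g(\zeta,\xi)-\mathbf{h}(\zeta)\,g(\xi,\eta)+\text{bracket terms}-\text{Cartan terms},
\end{equation*}
where the Cartan contribution comes only from the vertical argument $\theta(\mathbf{h}(\cdot))=0$ combined with any non-horizontal residue. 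Since the right-hand side is determined entirely by $F,g,\mathcal{A}$ and the Ehresmann splitting, $\nabla$ is unique.

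For existence, I would simply take the right-hand side of the displayed Koszul formula as the definition of $g(\nabla_{\mathbf{h}(\xi)}\eta,\zeta)$, extend $\nabla$ to vertical directions by setting $\nabla_{\mathbf{v}(\xi)}\eta$ so as to enforce (ii) on vertical arguments (the solution is forced by the Cartan correction and is $C^\infty(\mathring{T}M,\mathbb{R})$-linear in $\xi$), and then use the decomposition (\ref{decomposition}) to extend to arbitrary $X\in\chi(\mathring{T}M)$. Checking that the object so defined is indeed a linear connection (i.e.\ tensorial in $X$, Leibniz in $\xi$) is routine from the derivation properties of the horizontal and vertical lifts. Verifying (i) reduces, by tensoriality, to the four cases obtained from the splitting $T\mathring{T}M=\mathcal{H}\oplus\mathcal{V}$, each of which is a direct computation once one recalls that $\pi_*\mathbf{v}=0$ and $\pi_*\mathbf{h}=\mathrm{id}$; verifying (ii) on each slot follows by symmetrising/antisymmetrising the defining formula.

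The main obstacle is bookkeeping with the Cartan tensor $\mathcal{A}$. In the Riemannian case the classical Koszul trick closes cleanly because $g$ depends only on $x$, so the cyclic sum kills every unwanted term. Here the term $2\mathcal{A}(\theta(X),\xi,\eta)$ appears in each of the three cyclic copies of (ii), and one must show that after the $+,+,-$ recombination the residual Cartan contributions reassemble into an expression that depends only on $\theta(X)$ of the fixed connection slot. This is where the factor $F$ in the definition of $\mathcal{A}$ and the $1$-homogeneity of $F$ from Definition \ref{defi1}(2) become essential, because they are what make $\mathcal{A}(\theta(\mathbf{h}(\xi)),\cdot,\cdot)=0$ on purely horizontal triples and leave the surviving Cartan term in the uniqueness formula well-defined. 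Once that identity is verified, both existence and uniqueness follow and the rest of the argument is formal.
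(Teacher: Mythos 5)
The paper itself gives no proof of this lemma: it is quoted from Djelid's thesis \cite{biRatiba}, and the local Christoffel formula displayed immediately after it is the output of the standard coordinate argument. Your Koszul-type plan does reproduce that argument correctly on the horizontal directions: with $X=\textbf{h}(\xi)$, $Y=\textbf{h}(\eta)$, $Z=\textbf{h}(\zeta)$ every Cartan term in (ii) vanishes because $\theta\circ\textbf{h}=0$, and the cyclic $+,+,-$ combination together with (i) yields a genuine Koszul identity determining $g(\nabla_{\textbf{h}(\xi)}\eta,\zeta)$. (Note that there are then no residual ``Cartan terms'' in that formula, and their absence has nothing to do with the factor $F$ in $\mathcal{A}$ or with the $1$-homogeneity of $F$ --- it is simply $\theta(\textbf{h}(\xi))=0$ by definition of the horizontal lift.)

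The genuine gap is your treatment of the vertical directions. You assert that for vertical $X$ the values $\nabla_X\eta$ are ``constrained by property (ii)'' and, for existence, that one can ``set $\nabla_{\textbf{v}(\xi)}\eta$ so as to enforce (ii)''. This cannot work: for a fixed vertical $X$, condition (ii) only prescribes the $g$-symmetric part $g(\nabla_X\xi,\eta)+g(\xi,\nabla_X\eta)$, hence determines the operator $\nabla_X$ at best up to an arbitrary $g$-skew-symmetric endomorphism of $\pi^*TM$; your extension is therefore neither forced nor unique, and uniqueness of $\nabla$ does not follow from your argument. What actually pins down the vertical derivatives is the symmetry condition (i): taking $X$ vertical, so that $\pi_*X=0$, it reads $\nabla_X\pi_*Y=\pi_*[X,Y]$, and since every section of $\pi^*TM$ is locally a $C^\infty(\mathring{T}M,\mathbb{R})$-combination of the $\pi_*(\partial/\partial x^k)$, this forces $\nabla_{\partial/\partial y^j}\partial_k=0$, i.e.\ the connection forms have the shape $\omega^i_k=\Gamma^i_{kj}dx^j$ with no $dy$-component. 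Condition (ii) for vertical $X$ is then an identity to be \emph{verified}, not imposed, and it holds precisely because of the definition $\mathcal{A}=\frac{F}{2}\frac{\partial g_{ij}}{\partial y^k}dx^i\otimes dx^j\otimes dx^k$: both covariant-derivative terms vanish and $X(g_{ij})=2\mathcal{A}(\theta(X),\partial_i,\partial_j)$ is exactly the $y$-derivative of $g$. Reorganizing your proof along these lines --- vertical part from (i), horizontal part from your Koszul formula, then the consistency check of (ii) on vertical arguments --- closes the gap and recovers the displayed formula for $\Gamma^i_{jk}$.
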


 One has
% \begin{eqnarray}
 $\nabla_{\frac{\delta}{\delta x^j}}\frac{\partial}{\partial x^k}=\Gamma_{jk}^i\frac{\partial}{\partial x^i}, \text{   }
\Gamma_{jk}^i:=\frac{1}{2}g^{il}\left(\frac{\delta g_{jl}}{\delta x^k}+\frac{\delta g_{lk}}{\delta x^j}
 -\frac{\delta g_{jk}}{\delta x^l}\right)$%\label{8775244}
% \end{eqnarray}
 where\\
%  \begin{eqnarray}
  $\left\{\frac{\delta}{\delta x^i}:=\frac{\partial}{\partial x^i}-N_i^j\frac{\partial}{\partial y^j}
  =\textbf{h}(\frac{\partial}{\partial x^i})\right\}_{i=1,...,n}\text{     with     }
 N_j^i=\Gamma_{jk}^iy^k.$%\label{562564654}
%  \end{eqnarray}
%  The generalized Cartan connection on $\pi^*TM$ is given as follows.
% \begin{lemma}\cite{biMbatakou2015}\label{lem2}
%   Let $(M,F)$ be a Finslerian manifold and $g$ its fundamental tensor. % fundamental tensor associated with $F$ and
% %   $\pi_*$ the tangent mapping of the submersion 
% %  $\pi:\mathring{T}M \longrightarrow M$. 
%  There exist a unique linear connection ${}^c\nabla$ 
%  on the vector bundle $\pi^*TM$ such that, for all 
%  $X,Y\in \chi(\mathring{T}M)$ and for every $\xi, \eta\in\Gamma(\pi^*TM)$, one has the following properties:
%  \begin{itemize}
%   \item [(i)] Symmetry: 
%   \begin{eqnarray}\label{ca1a}
%    {}^c\nabla_X\pi_*Y-{}^c\nabla_Y\pi_*X=\pi_*[X,Y]
%   +\left(\mathcal{A}(\theta(X),\pi_*Y,\bullet)\right)^\sharp-\left(\mathcal{A}(\pi_*X,\theta(Y),\bullet)\right)^\sharp,\nonumber
%   \end{eqnarray}
%   \item [(ii)]$g$-compatibility:
%   \begin{eqnarray}\label{ca2a}
%    X(g(\xi,\eta))=g({}^c\nabla_X\xi,\eta)+g(\xi,{}^c\nabla_X\eta)\nonumber
%   \end{eqnarray}
%   where $\mathcal{A}$ is the Cartan tensor and $(~~~)^{\sharp}$ the section of $\pi^*TM$ dual to $\mathcal{A}$ defined by 
%   $g\left(\mathcal{A}(\xi,\eta,\bullet\right)^\sharp,\nu)=\mathcal{A}(\xi,\eta,\nu)$ for any $\xi,\eta,\nu\in\Gamma(\pi^*TM)$.
%  \end{itemize}
%  \end{lemma}
\section{Finslerian $R$-Einstein metrics}\label{dhqhdhq}
\subsection{First curvatures $R$ associated with the Chern}
\begin{definition}
 The full curvature of a linear connection $\overline{\nabla}$
 on the vector bundle $\pi^*TM$ over the manifold $\mathring{T}M$ is the application
$$ \phi:\begin{matrix}
 \chi(\mathring{T}M)\times\chi(\mathring{T}M)\times\Gamma(\pi^*TM)&\to&\Gamma(\pi^*TM)\\
 (X,Y,\xi)&\mapsto
 &\phi(X,Y)\xi=\overline{\nabla}_X\overline{\nabla}_Y\xi-\overline{\nabla}_Y\overline{\nabla}_X\xi-\overline{\nabla}_{[X,Y]}\xi.
 \end{matrix}$$
 \end{definition}
 By the relation (\ref{decomposition}), we have
  \begin{eqnarray}
   \overline{\nabla}_X=\overline{\nabla}_{\hat{X}}+\overline{\nabla}_{\check{X}},\label{Bsndn1}
  \end{eqnarray}
where $X=\hat{X}+\check{X}$ with $\hat{X}\in\Gamma(\mathcal{H})$ and $\check{X}\in\Gamma(\mathcal{V})$.

%  Using the Finslerian metric $F$,
 One can define the full curvature  
 of $\overline{\nabla}$ by the following formula:
\begin{eqnarray}
 \Phi(\xi,\eta,X,Y)&=&g(\phi(X,Y)\xi,\eta)\nonumber\\
                   &=&g(\phi(\hat{X},\hat{Y})\xi+\phi(\hat{X},\check{Y})\xi
                   +\phi(\check{X},\hat{Y})\xi+\phi(\check{X},\check{Y})\xi,\eta)\nonumber\\
                   &=&\textbf{R}(\xi,\eta,X,Y)+\textbf{P}(\xi,\eta,X,Y)+\textbf{Q}(\xi,\eta,X,Y),\nonumber
\end{eqnarray}
where %$\textbf{R}$, $\textbf{P}$ and $\textbf{Q}$  
  $
      \textbf{R}(\xi,\eta,X,Y)=g(\phi(\hat{X},\hat{Y})\xi,\eta),$ %\nonumber\\
   $\textbf{P}(\xi,\eta,X,Y)=g(\phi(\hat{X},\check{Y})\xi,\eta)
                   +g(\phi(\check{X},\hat{Y})\xi,\eta)$ and $\textbf{Q}(\xi,\eta,X,Y)=g(\phi(\check{X},\check{Y})\xi,\eta)$
                   are respectively the \textit{first (horizontal) curvature}, 
\textit{mixed curvature} and \textit{vertical} curvature. 

In particular, if $\overline{\nabla}$ is the Chern connection, the $\textbf{Q}$-curvature vanishes.
% \begin{eqnarray}
%   \Phi(\xi,\eta,X,Y)
%                    &=&\textbf{R}(\xi,\eta,X,Y)+\textbf{P}(\xi,\eta,X,Y).\nonumber
% \end{eqnarray}
% That is, the $\textbf{Q}$-curvature of the Chern connection vanishes.
% \begin{pro}\cite{biNibaruta4}\label{pro01b}
%  Let ${}^c\Phi$ and $\Phi$ be the full curvature tensors associated with the Cartan connection and the Chern connection, respectively.
%  Then in the horizontal direction ${}^c\Phi=\Phi$.
%  \end{pro}
% \subsection{Horizantal Ricci tensor and horizantal scalar curvatures}
\subsection{$R$-Einstein metric}
With respect to the Chern connection, we have:
\begin{definition}\label{defi4b}
\begin{itemize}
 \item [(1)]
   The horizontal Ricci tensor $\textbf{Ric}_F^H$ and the horizontal scalar curvature $\textbf{Scal}_F^H$ of $(M,F)$ 
   are respectively defined by
%    \begin{eqnarray}
   $$\textbf{Ric}_F^H(\xi,X):=\sum_{a=1}^n\textbf{R}(\xi,e_a,X,\hat{e}_a)$$ and %\label{03as}\\
  $$\textbf{Scal}_F^H:=\sum_{a=1}^n\textbf{Ric}_F^H(e_a,\hat{e}_a)=\sum_{a,b=1}^n\textbf{R}(e_a,e_b,\hat{e}_a,\hat{e}_b).$$ 
  %\label{Sd1}
%     \end{eqnarray}
     \item [(2)]A Finslerian metric $F$ on an $n$-dimensional manifold is
$R$-Einstein if   
  \begin{eqnarray}
   \textbf{Ric}_F^H=\frac{1}{n}\textbf{Scal}_F^H\underline{g}.\label{EinstC1}
  \end{eqnarray}
  \end{itemize}
  \end{definition}
  \begin{rem}\label{rem10}
 If $F$ satisfies (\ref{EinstC1})
 for a constant function $\textbf{Scal}_F^H$ (respectively for $\textbf{Scal}_F^H\equiv0$) then $F$ is said to be  
   horizontally Ricci-constant (respectively, $F$ is called horizontally Ricci-flat metric).
 \end{rem}
\begin{lemma}\cite{biNibaruta4}\label{proShu}
   If $F$ is horizontally an Einstein metric on a connected manifold of dimension $n\geq3$ then its horizontal scalar curvature 
   $\textbf{Scal}_F^H$ is constant.
  \end{lemma}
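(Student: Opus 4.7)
The plan is to transcribe the classical Schur argument from Riemannian geometry into the Finslerian framework based on the Chern connection. The key input in the Riemannian proof is the contracted second Bianchi identity, which equates the horizontal divergence of the Ricci tensor with one half of the gradient of the scalar curvature; I expect the same mechanism to work here because the first curvature $\textbf{R}$ associated with the Chern connection satisfies an analogous horizontal second Bianchi identity, and the ``bad'' terms coming from almost-compatibility (the Cartan tensor $\mathcal{A}$) drop out when one differentiates only along horizontal lifts, since $\theta(\textbf{h}(\xi)) = 0$ in Lemma \ref{lem1}(ii).

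First, I would take the horizontal covariant derivative of the Einstein relation (\ref{EinstC1}). Because $\nabla^H \underline{g} = 0$ (horizontal derivatives of $g$ vanish thanks to $\theta \circ \textbf{h} = 0$ in the almost-compatibility formula), this yields, for every horizontal section $\textbf{h}(\xi)$,
\begin{equation*}
\nabla_{\textbf{h}(\xi)} \textbf{Ric}_F^H \;=\; \frac{1}{n}\, \textbf{h}(\xi)(\textbf{Scal}_F^H)\, \underline{g}.
\end{equation*}
Second, I would trace this identity by contracting with an orthonormal $g$-frame $\{e_a\}$ in the flag direction used to define $\textbf{Scal}_F^H$. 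On the right-hand side the trace of $\underline{g}$ produces the factor $n$, leaving $\textbf{h}(\xi)(\textbf{Scal}_F^H)$. On the left-hand side the contracted horizontal second Bianchi identity for $\textbf{R}$ gives $2\,\mathrm{div}^H \textbf{Ric}_F^H = d^H \textbf{Scal}_F^H$, so the trace produces $\tfrac{1}{2}\textbf{h}(\xi)(\textbf{Scal}_F^H)$. Equating the two expressions yields
\begin{equation*}
\Bigl(\tfrac{1}{2} - \tfrac{1}{n}\Bigr)\, \textbf{h}(\xi)(\textbf{Scal}_F^H) = 0,
\end{equation*}
and the coefficient $\tfrac{n-2}{2n}$ is nonzero exactly because $n\geq 3$. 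Hence every horizontal derivative of $\textbf{Scal}_F^H$ vanishes.

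Finally, I would conclude by invoking connectedness of $M$: the vanishing of horizontal derivatives of $\textbf{Scal}_F^H$ along all curves in $\mathring{T}M$ coming from $M$-curves (via horizontal lifts), combined with the $0$-homogeneity structure of both sides of (\ref{EinstC1}) in $y$ which forces $\textbf{Scal}_F^H$ to descend to a function on $M$ (since $\textbf{Ric}_F^H$ and $\underline{g}$ share the same homogeneity degree, and the Einstein proportionality factor is therefore $y$-independent on fibers), gives that $\textbf{Scal}_F^H$ is locally constant on $M$, hence globally constant. The main obstacle I anticipate is the careful verification of the Finslerian horizontal second Bianchi identity in the form $2\,\mathrm{div}^H \textbf{Ric}_F^H = d^H\textbf{Scal}_F^H$: the Chern connection only satisfies \emph{almost} $g$-compatibility, so the standard Riemannian derivation produces extra Cartan-tensor contributions, and one must check that these correction terms either cancel by symmetry or vanish after the two horizontal traces are taken.
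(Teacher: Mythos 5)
The paper itself offers no proof of this lemma --- it is imported wholesale from \cite{biNibaruta4} --- so your Schur-type skeleton cannot be checked against an argument printed here; it is, however, certainly the intended mechanism. That said, as written your proposal has two genuine gaps. The first is the one you yourself flag and then leave unresolved: the contracted horizontal second Bianchi identity $2\,\mathrm{div}^H \textbf{Ric}_F^H = d^H\textbf{Scal}_F^H$ is the \emph{entire} technical content of the lemma in the Finslerian setting, not a routine verification. For the Chern connection the horizontal second Bianchi identity is not the naive cyclic sum equal to zero: because the horizontal distribution $\mathcal{H}$ is non-integrable, the bracket of two horizontal lifts has a vertical component measured by the curvature of the Ehresmann connection, and the cyclic sum of horizontal derivatives of $\textbf{R}$ picks up correction terms involving the mixed curvature $\textbf{P}$ contracted against that curvature. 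These terms do not disappear merely because $\theta\circ\textbf{h}=0$ (that observation only handles the Cartan-tensor terms in the almost-compatibility formula, i.e.\ it gives $\nabla^H\underline{g}=0$, which you correctly use). Whether the $\textbf{P}$-contributions cancel after the two traces is exactly what must be proved, and without it the coefficient computation $\bigl(\tfrac{1}{2}-\tfrac{1}{n}\bigr)\textbf{h}(\xi)(\textbf{Scal}_F^H)=0$ is unsupported.

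The second gap is in your concluding step. Vanishing of all horizontal derivatives of $\textbf{Scal}_F^H$ only makes the function constant along horizontal curves in $\mathring{T}M$; since $\mathcal{H}$ has rank $n$ inside the $2n$-dimensional manifold $\mathring{T}M$, this does not by itself give constancy on $\mathring{T}M$. You need $\textbf{Scal}_F^H$ to descend to a function on $M$, i.e.\ to be independent of $y$ on each fiber, and the homogeneity argument you offer does not deliver this: $0$-homogeneity in $y$ makes a function constant along rays, not along the whole punctured fiber $T_xM\setminus\{0\}$. The fiber-independence has to be extracted separately (e.g.\ from vertical differentiation of the Einstein relation and properties of $\mathcal{A}$), or assumed as part of the definition of the $R$-Einstein condition; as the proposal stands, the passage from ``horizontally parallel'' to ``constant'' is not justified.
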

 \section{Finslerian $R$-Einstein conformal invariance}\label{adgbjah}
 \begin{definition}
Let $F$ and $\widetilde{F}$ be two conformal Finslerian metrics on a manifold $M$, with $\widetilde{F}=e^uF$
  . A geometric object $\mathcal{O}$ associated with 
  $F$ is said to be conformally invariant (respectively conformally $u$-invariant) 
  if the object $\widetilde{\mathcal{O}}$ 
  associated with $\widetilde{F}$ satisfies $\widetilde{\mathcal{O}}=\mathcal{O}$.
  (respectively, $\widetilde{\mathcal{O}}=e^u\mathcal{O}$).
\end{definition}
 \begin{pro}\label{lemgrosb}\cite{biNibaruta1}
  Let $F$ and $\widetilde{F}$ be two Finslerian metrics on an $n$-dimensional manifold $M$. 
  If $F$ is conformal to $\widetilde{F}$, with $\widetilde{F}=e^u F$,
 then the trace-free horizontal Ricci tensors $\textbf{E}_F^H$ and $\widetilde{\textbf{E}}_{\widetilde{F}}^H$, associated
   with $F$ and $\widetilde{F}$ respectively, are related by
\begin{eqnarray}\label{122b}
\widetilde{\textbf{E}}_{\widetilde{F}}^H 
= \textbf{E}_F^H
 -(n-2)\left(H_u-du\circ du\right)
% \nonumber\\
% &&
-\frac{(n-2)}{n}\left(\Delta^Hu+||\triangledown u||_g^2\right)\underline{g}+{\varPsi}_u
\end{eqnarray}
where ${\varPsi}_u$ is the $(1,1;0)$-tensor on $(M,F)$ given by
\begin{eqnarray}\label{123b}
{\varPsi}_u(\xi,X)
&:=&(n-4)\mathcal{A}(\mathcal{B}(\textbf{h}(\triangledown u),\pi_*X,\xi))\nonumber\\
&&+(2-n)\Big[\mathcal{A}(\triangledown u,\mathcal{B}(X),\xi)
% \nonumber\\
% &&
 +\mathcal{A}(\triangledown u,\pi_*X,\mathcal{B}(\textbf{h}(\xi)))\Big]\nonumber\\
  &&
  +\frac{1}{n}g^{ij}\left[2(n-2)\mathcal{A}(\triangledown u,\partial_i,\mathcal{B}(\hat{\partial}_j)))
%   \right.\nonumber\\
%   &&\left.
  -3\mathcal{A}(\mathcal{B}(\textbf{h}(\triangledown u),\partial_j,\partial_i))\right]g(\xi,\pi_*X)\nonumber\\
    &&+g^{ij}\Big[g\left(\Theta(X,\textbf{h}(\Theta(\hat{\partial}_j,\textbf{h}(\xi)))),\partial_i\right)
%     \nonumber\\
%     &&
  -g\left(\Theta(\hat{\partial}_j,\textbf{h}(\Theta(X,\textbf{h}(\xi))),\partial_i\right)\Big]\nonumber\\
  &&+g^{ij}\Big[g\left((\nabla_X\Theta) (\hat{\partial}_j,\textbf{h}(\xi)),\partial_i\right)
%   \nonumber\\
%   &&
  -g\left((\nabla_j\Theta) (\textbf{h}(\xi),X),\partial_i\right)\Big]\nonumber\\
  &&
-\frac{1}{n}g^{ij}g^{kl}\Big[
% \mathcal{A}(\Theta_{jk},\mathcal{B}(\hat{\partial}_l),\partial_i))
  \mathcal{A}(\mathcal{B}(\textbf{h}(\Theta_{jk}),\partial_l,\partial_i))
%   \nonumber\\
  %\right.\nonumber\\
%   &&-\mathcal{A}(\Theta_{jk},\partial_l,\mathcal{B}(\textbf{h}(\partial_i)))
%   -\mathcal{A}(\Theta_{kl},\mathcal{B}(\hat{\partial}_j),\partial_i))\nonumber\\
%   &&\left.
% &&
-\mathcal{A}(\mathcal{B}(\textbf{h}(\Theta_{kl}),\partial_j,\partial_i))
%   +\mathcal{A}(\Theta_{kl},\partial_j,\mathcal{B}(\textbf{h}(\partial_i)))
  \Big]g(\xi,\pi_*X)\nonumber\\
  &&-\frac{1}{n}g^{ij}g^{kl}\Big[g\left((\nabla_l\Theta)_{jk},\partial_i\right)
%   \nonumber\\
%   &&
  -g\left((\nabla_j\Theta)_{kl},\partial_i\right)\Big]g(\xi,\pi_*X),
\end{eqnarray}
 for every $\xi\in\Gamma(\pi^*TM)$ and $X\in\chi(\mathring{T}M)$ with $\Theta_{ij}=\Theta(\hat{\partial}_i,\hat{\partial}_j)$ 
 and 
$\mathcal{B}$ is the application which maps $\pi^*TM$ to $\pi^*TM$ given by %for every 
% local section $\xi=\xi^k\partial x^k$,
\begin{eqnarray}\label{djgdhdgkgd1}
 \mathcal{B}=\mathcal{B}_j^i\partial_i\otimes dx^j %~~~\text{  
\end{eqnarray}
with  %}~~~
 \begin{eqnarray}
\mathcal{B}_j^i=\frac{1}{2F}\left(\nabla_ru\right)\frac{\partial(F^2g^{ir}-2y^iy^r)}{\partial y^j}.%\label{degjeh8b}
 \end{eqnarray}
 \end{pro}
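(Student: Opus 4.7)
The plan is to derive the transformation law \eqref{122b} by propagating the conformal factor $e^{2u}$ through the entire chain of objects built from $F$: the fundamental tensor, the nonlinear connection, the Chern Christoffel symbols, the horizontal curvature $\textbf{R}$, its trace, and finally the trace-free part. First I start from $\widetilde{g}_{ij}=e^{2u}g_{ij}$ and compute the transformation of the spray coefficients; this gives $\widetilde{N}^i_j = N^i_j + u_j y^i + y_j u^i - u^k y_k \delta^i_j + \text{terms involving the Cartan tensor}$, the last group being precisely what is encoded in the application $\mathcal{B}$ of \eqref{djgdhdgkgd1}, because $\mathcal{B}^i_j$ is built from $\partial(F^2g^{ir}-2y^iy^r)/\partial y^j$. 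This step fixes how the horizontal vector fields $\delta/\delta x^i$ transform.

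Next I substitute $\widetilde{g}_{ij}=e^{2u}g_{ij}$ and $\widetilde{\delta}/\widetilde{\delta x^i}$ into the Chern formula of Lemma \ref{lem1} to get $\widetilde{\Gamma}^i_{jk}=\Gamma^i_{jk}+T^i_{jk}$. The purely Riemannian part of $T^i_{jk}$ produces the standard conformal correction, and after taking the commutator needed for curvature it assembles into the horizontal Hessian piece $H_u-du\circ du$ and the trace part $(\Delta^H u+\|\nabla u\|_g^2)\underline{g}/n$ visible in \eqref{122b}. The genuinely Finslerian cross-terms, obtained from the Cartan contribution in the almost-$g$-compatibility identity of Lemma \ref{lem1}, furnish precisely the $\mathcal{A}(\nabla u,\cdot,\mathcal{B}(\cdot))$ and $\mathcal{A}(\mathcal{B}(\textbf{h}(\nabla u)),\cdot,\cdot)$ terms that open the expression \eqref{123b} for $\varPsi_u$.

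Third, I plug the new connection into the horizontal curvature $\textbf{R}$ using the decomposition \eqref{Bsndn1}, noting that the $\overline{\nabla}_{\check{X}}$ piece is sensitive to the $y$-dependence of $u$ through $\widetilde{N}$; this is where the torsion-like object $\Theta$ enters and, after taking the Chern trace $\sum_a \textbf{R}(\xi,e_a,X,\hat{e}_a)$, yields the last four lines of \eqref{123b}, namely the iterated $\Theta(\cdot,\textbf{h}(\Theta(\cdot,\cdot)))$ commutators and the $(\nabla\Theta)$ pieces. Finally, subtracting $\frac{1}{n}\widetilde{\textbf{Scal}}^H\widetilde{\underline{g}}$ and invoking Lemma \ref{proShu} to keep $\textbf{Scal}^H$ under control, the overall $e^{2u}$ factor common to $\widetilde{\textbf{Ric}}^H$ and to $\widetilde{\underline{g}}$ cancels, so the trace-free tensor identity emerges in the unweighted form stated.

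The main obstacle will not be any single calculation but the bookkeeping of Cartan-type contributions. Each substitution into the Chern formula spawns several $\mathcal{A}$-terms with different slot positions, and these must be reorganized using the symmetry $\mathcal{A}(X,Y,Z)=\mathcal{A}_{(XYZ)}$ and the homogeneity identity $\mathcal{A}(y,\cdot,\cdot)=0$ before they can be packaged via $\mathcal{B}$ and $\Theta$. Recovering the exact numerical coefficients $(n-4)$, $(2-n)$, $2(n-2)$ and $-3$ in \eqref{123b} requires tracking how many times each Cartan term is produced by the Chern symmetrization and how many times it survives the Ricci contraction; this combinatorial accounting, together with verifying that the contributions transverse to $\underline{g}$ really do vanish after subtracting the scalar-curvature trace, is where the real work sits.
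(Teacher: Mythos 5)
You should first be aware that the paper contains no proof of Proposition \ref{lemgrosb} to compare against: the statement is imported verbatim from \cite{biNibaruta1}, and the present article only uses it (by evaluating \eqref{122b} in a local chart to derive \eqref{E-E9}). So your argument has to stand on its own, and as written it does not: it is a roadmap rather than a proof. The entire mathematical content of the proposition is the precise shape of $\varPsi_u$ in \eqref{123b} --- which terms appear, in which slots of $\mathcal{A}$ and $\Theta$, and with which coefficients $(n-4)$, $(2-n)$, $2(n-2)$, $-3$, $\frac{1}{n}$ --- and none of this is derived. Each stage of your plan asserts that the substitution ``produces,'' ``assembles into,'' or ``furnishes precisely'' the corresponding block of \eqref{123b} without computing anything: the conformal change of $N^i_j$ is stated up to ``terms involving the Cartan tensor'' that are simply declared to coincide with $\mathcal{B}$; the difference $\widetilde{\Gamma}^i_{jk}-\Gamma^i_{jk}$ is never written down; the curvature commutator and the Ricci contraction are never carried out. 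You candidly identify this bookkeeping as ``where the real work sits,'' but that work is exactly the proof, so what remains is a plausible strategy that could only be checked against a result one already knows to be true.

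Two further points. First, the appeal to Lemma \ref{proShu} is out of place: that lemma says an $R$-Einstein metric on a connected manifold of dimension $n\geq 3$ has constant $\textbf{Scal}_F^H$, whereas Proposition \ref{lemgrosb} assumes no Einstein condition whatsoever; the trace-free tensor $\textbf{E}_F^H=\textbf{Ric}_F^H-\frac{1}{n}\textbf{Scal}_F^H\,\underline{g}$ is formed by subtracting the trace term regardless of whether the scalar curvature is constant, so nothing needs to be ``kept under control'' there. Second, the claimed cancellation of the overall conformal weight between $\widetilde{\textbf{Ric}}^H$ and $\widetilde{\underline{g}}$, which is what makes \eqref{122b} come out unweighted, is asserted rather than verified; in this formalism that depends on how $\widetilde{\theta}$, $\widetilde{\textbf{h}}$ and the dual frames rescale, and it is precisely the kind of convention-sensitive step that should be checked explicitly rather than assumed. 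To turn the proposal into a proof you would need to execute steps (i)--(iii) of your own outline in coordinates and exhibit the coefficients of \eqref{123b} at the end.
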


%  \begin{proof}
%   We prove the necessity. 
  By the Proposition \ref{lemgrosb}, if $F$ and $\widetilde{F}$ are conformal 
%   with 
%   $\widetilde{F}=e^uF$, 
  then the $R$-Einstein criterions $\textbf{E}_F^H$ and $\widetilde{\textbf{E}}_{\widetilde{F}}^H$ satisfy
   the relation (\ref{122b}).
  In a local chart, if $\textbf{E}_F^H=\widetilde{\textbf{E}}_{\widetilde{F}}^H$ then we have
   \begin{eqnarray}\label{005b1q}
  0&=& \Big[
 -(n-2)\left(H_u-du\circ du\right)%\nonumber\\
%  &&
-\frac{(n-2)}{n}\left(\Delta^Hu+||\triangledown u||_g^2\right)\underline{g}\Big](\partial_i,\hat{\partial}_j)\nonumber\\
&&+(2-n)\Big[\mathcal{A}(\triangledown u,\mathcal{B}(\hat{\partial}_j),\partial_i)%\nonumber\\
%   &&
 +\mathcal{A}(\triangledown u,\pi_*\hat{\partial}_j,\mathcal{B}(\textbf{h}(\partial_i)))\Big]\nonumber\\
  &&+(n-4)\mathcal{A}(\mathcal{B}(\textbf{h}(\triangledown u),\pi_*\hat{\partial}_j,\partial_i))\nonumber\\
&&
  +\frac{1}{n}g^{kl}\Big[2(n-2)\mathcal{A}(\triangledown u,\partial_k,\mathcal{B}(\hat{\partial}_l)))%\nonumber\\
%   &&
  -3\mathcal{A}(\mathcal{B}(\textbf{h}(\triangledown u),
  \partial_l,\partial_k))\Big]g(\partial_i,\pi_*\hat{\partial}_j).\nonumber\\
    &&+g^{ij}\Big[g\left(\Theta(\hat{\partial}_j,\textbf{h}(\Theta(\hat{\partial}_l,\textbf{h}(\partial_i)))),\partial_k\right)%\nonumber\\
%   &&
  -g\left(\Theta(\hat{\partial}_j,\textbf{h}(\Theta(\hat{\partial}_l,\textbf{h}(\partial_i))),\partial_k\right)\Big]\nonumber\\
  &&+g^{kl}\Big[g\left((\nabla_j\Theta) (\hat{\partial}_l,\textbf{h}(\partial_i)),\partial_k\right)%\nonumber\\
%   &&
  -g\left((\nabla_l\Theta) (\textbf{h}(\partial_i),\hat{\partial}_j),\partial_k\right)\Big]\nonumber\\
  &&
-\frac{1}{n}g^{rs}g^{kl}\Big[
  \mathcal{A}(\mathcal{B}(\textbf{h}(\Theta_{sk}),\partial_l,\partial_r))%\nonumber\\
%   &&
-\mathcal{A}(\mathcal{B}(\textbf{h}(\Theta_{kl}),\partial_s,\partial_r))
  \Big]g_{ij}\nonumber\\
  &&-\frac{1}{n}g^{rs}g^{kl}\left[g\left((\nabla_l\Theta)_{sk},\partial_r\right)
  -g\left((\nabla_s\Theta)_{kl},\partial_r\right)\right]g_{ij}.
 \end{eqnarray}
  
  By (\ref{djgdhdgkgd1}), we have $ \mathcal{B}(\hat{\partial}_l)=\mathcal{B}_l^{s_1}\partial_{s_1}$ 
%  \begin{eqnarray}
%   \mathcal{B}(\hat{\partial}_l)%&=&\mathcal{B}_{s_2}^{s_1}\partial_{s_1}\otimes dx^{s_2}(\hat{\partial}_l)\nonumber\\
%                                %&=&\mathcal{B}_{s_2}^{s_1}\delta_l^{s_2}\partial_{s_1}\nonumber\\
%                                 &=&\mathcal{B}_l^{s_1}\partial_{s_1}\nonumber
%  \end{eqnarray}
where $\mathcal{B}_l^{s_1}=\frac{1}{2F}\left(\nabla_ru\right)\frac{\partial(F^2g^{{s_1}r}-2y^{s_1}y^r)}{\partial y^l}$ and
%  \begin{eqnarray}
%   \mathcal{B}(\textbf{h}(\triangledown u))%&=&\mathcal{B}_{s_2}^{s_1}\partial_{s_1}\otimes dx^{s_2}
% %   (\textbf{h}(\nabla^lu\partial_l))\nonumber\\
%                                &=&\nabla^lu\mathcal{B}_l^{s_1}\partial_{s_1}.\nonumber
%  \end{eqnarray}
 $\mathcal{B}(\textbf{h}(\triangledown u))=\nabla^lu\mathcal{B}_l^{s_1}\partial_{s_1}.$
Thus, from (\ref{005b1q}), we have
 \begin{eqnarray}
  I_{11}&=&(2-n)\left[\mathcal{A}(\triangledown u,\mathcal{B}(\hat{\partial}_j),\partial_i)
 +\mathcal{A}(\triangledown u,\pi_*\hat{\partial}_j,\mathcal{B}(\textbf{h}(\partial_i)))\right]\nonumber\\
  &&+(n-4)\mathcal{A}(\mathcal{B}(\textbf{h}(\triangledown u),\pi_*\hat{\partial}_j,\partial_i))\nonumber\\
&=&(n-4)\nabla^{s_2}u\mathcal{B}_{s_2}^{s_1}\mathcal{A}_{s_1ij}%\nonumber\\
%&&
-(n-2)\Big(\nabla^{s_2}u\mathcal{B}_{i}^{s_1}\mathcal{A}_{s_1js_2}
+\nabla^{s_2}u\mathcal{B}_{j}^{s_1}\mathcal{A}_{s_1is_2}\Big),\nonumber\\
I_{12}&=&\frac{1}{n}g^{kl}\Big[2(n-2)\mathcal{A}(\triangledown u,\partial_k,\mathcal{B}(\hat{\partial}_l)))
  -3\mathcal{A}(\mathcal{B}(\textbf{h}(\triangledown u),
  \partial_l,\partial_k))\Big]g_{ij}\nonumber\\
   &=&-\frac{1}{n}g^{kl}\nabla^{s_2}u\Big[-3\mathcal{B}_{s_2}{s_1}\mathcal{A}_{s_1kl}
   +3\mathcal{B}_{k}{s_1}\mathcal{A}_{s_1ls_2}-(2n-1)\mathcal{B}_{k}{s_1}\mathcal{A}_{s_1ls_2}\Big]g_{ij},\nonumber\\
   I_{13}&=&-\frac{1}{n}g^{rs}g^{kl}\left[
  \mathcal{A}(\mathcal{B}(\textbf{h}(\Theta_{sk}),\partial_l,\partial_r))
-\mathcal{A}(\mathcal{B}(\textbf{h}(\Theta_{kl}),\partial_s,\partial_r))
  \right]g_{ij},\nonumber\\
% \end{eqnarray}
%  \begin{eqnarray}
       I_{14}&=&g^{kl}\left[g\left(\Theta(\hat{\partial}_j,\textbf{h}(\Theta(\hat{\partial}_l,\textbf{h}(\partial_i)))),\partial_k\right)
  -g\left(\Theta(\hat{\partial}_l,\textbf{h}(\Theta(\hat{\partial}_j,\textbf{h}(\partial_i))),\partial_k\right)\right]\nonumber\\
  &=&\frac{1}{n}g^{kl}g^{rs}g_{ij}\left[g\left(\Theta(\hat{\partial}_s,\textbf{h}
  (\Theta(\hat{\partial}_l,\textbf{h}(\partial_r)))),\partial_k\right)
  -g\left(\Theta(\hat{\partial}_l,\textbf{h}(\Theta(
  \hat{\partial}_s,\textbf{h}(\partial_r))),\partial_k\right)\right]\nonumber\\
   &=&-I_{13},\nonumber\\
%    \end{eqnarray}
%   \begin{eqnarray}
    I_{15}&=&-\frac{1}{n}g^{rs}g^{kl}\left[g\left((\nabla_l\Theta)_{sk},\partial_r\right)
   -g\left((\nabla_s\Theta)_{kl},\partial_r\right)\right]g_{ij},\nonumber
     \end{eqnarray}
  \begin{eqnarray}
 I_{16}
 &=&-\frac{1}{n}g^{rs}g^{kl}\left[g\left((\nabla_l\Theta)_{sk},\partial_r\right)
  -g\left((\nabla_s\Theta)_{kl},\partial_r\right)\right]g_{ij}\nonumber\\
  &=&g^{kl}[\frac{1}{n}g_{t_1t_2}g^{t_1t_2}][g_{it_1}g^{rt_1}g_{jt_2}g^{st_2}]%\nonumber\\
%  &&
 \left[g\left((\nabla_s\Theta)_{lr},\partial_k\right)
  -g\left((\nabla_l\Theta)_{rs},\partial_k\right)\right]\nonumber\\
       &=&-I_{15}.\nonumber
 \end{eqnarray}
  Hence, putting the expressions of $I_{11},I_{12},I_{13},I_{14},I_{15}$ and $I_{16}$ 
in the right-hand side of (\ref{005b1q}) we obtain %with the Remark \ref{REMK1b1}
the equation in (\ref{E-E9}) given 
% The sufficiency is obvious.
%  \end{proof}    We have the following.
 % \section{Einstein metrics invariant by conformal deformations} \label{Section4} 
%  In this Section we show a particular case of conformal deformations of Finslerian metrics which preserve the Einstein criterion. 
%   We answer the following question: "When can a $R$-Einstein metric be 
%   conformally related to some (possibly different) horizontal Einstein metric and in how many ways can it be so related?"
%    
%  If $F$ and $\widetilde{F}=e^uF$ are $R$-Einstein metrics then, from Definition \ref{defi10}, 
%  \begin{eqnarray}\label{E-EE}
%  \left\{ \begin{array}{ll}
%  \textbf{Ric}_F^H=\frac{1}{n}\textbf{Scal}_F^H\underline{g}\\
%  \widetilde{\textbf{Ric}}_{\widetilde{F}}^H=\frac{1}{n}\widetilde{\textbf{Scal}}_{\widetilde{F}}^H\widetilde{\underline{g}}.
% \end{array} \right.
%    \end{eqnarray}
%   
 %As a direct consequence the Proposition \ref{prop2}, 
 in the following Proposition.
 \begin{pro}\label{cor10'}
Let $F$ and $\widetilde{F}$ be two conformal Finslerian metrics on an $n$-dimensional manifold $M$, 
  with $\widetilde{F}=e^uF$. Then $(M,F)$ and $(M,\widetilde{F})$ have locally a same $R$-Einstein criterion
   if and only if
  \begin{eqnarray}\label{E-E9}
  \nabla_j\nabla_iu&=&\frac{1}{n}\left(\nabla^d\nabla_du-\nabla^du\nabla_du\right)g_{ij}+\nabla_iu\nabla_ju\nonumber\\
                  &&+\frac{(n-1)}{2n(n-2)F}\left(\nabla_ru\nabla^{q}u\right)\frac{\partial(F^2g^{rs}-2y^ry^s)}{\partial y^q}
                  g^{kl}\mathcal{A}_{skl}g_{ij}.
         \end{eqnarray}
%   where $\nabla_j\nabla_i u
%   =H_u(\partial_i,\hat{\partial}_j)$,
%   $\nabla^d\nabla_du=\Delta^Hu$ and $\nabla^du\nabla_du=||\triangledown u||_g^2$. 
 \end{pro}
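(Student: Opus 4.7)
The plan is to read off the proposition directly from the coordinate identity (\ref{005b1q}), which is already a rewriting of Proposition~\ref{lemgrosb}. So most of the work has been prepared; the proof is really an organizational step plus one algebraic simplification.

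First, I would note the equivalence $\textbf{E}_F^H=\widetilde{\textbf{E}}_{\widetilde{F}}^H$ iff the right-hand side of (\ref{122b}) equals $\textbf{E}_F^H$, i.e.\ iff
\begin{equation*}
(n-2)\bigl(H_u-du\circ du\bigr)+\frac{n-2}{n}\bigl(\Delta^H u+\|\nabla u\|_g^{2}\bigr)\underline{g}=\varPsi_u .
\end{equation*}
Evaluating both sides at $(\partial_i,\hat{\partial}_j)$ gives (\ref{005b1q}). Using $H_u(\partial_i,\hat{\partial}_j)=\nabla_j\nabla_iu$ and $(du\circ du)(\partial_i,\hat{\partial}_j)=\nabla_iu\,\nabla_ju$, together with $\underline{g}(\partial_i,\hat{\partial}_j)=g_{ij}$, the first two terms of (\ref{005b1q}) are already in the desired shape.

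Second, I would invoke the decomposition $\varPsi_u=I_{11}+I_{12}+I_{13}+I_{14}+I_{15}+I_{16}$ displayed above and use the cancellations $I_{14}=-I_{13}$ and $I_{16}=-I_{15}$ established there. This erases every term containing $\Theta$ (the curvature-type object) and its covariant derivatives, and leaves only pure Cartan-tensor contributions coming from $I_{11}+I_{12}$.

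Third, the surviving identity reads
\begin{equation*}
(n-2)\bigl(\nabla_j\nabla_iu-\nabla_iu\,\nabla_ju\bigr)+\frac{n-2}{n}\bigl(\nabla^d\nabla_du+\nabla^du\,\nabla_du\bigr)g_{ij}=I_{11}+I_{12}.
\end{equation*}
Expanding $\mathcal{B}_k^{s_1}$ through (\ref{djgdhdgkgd1}), relabelling dummy indices, and collecting the coefficients $(n-4)$, $-(n-2)$, $2(n-2)$, $-3$ and $-(2n-1)$ attached to the various Cartan contractions, the whole $I_{11}+I_{12}$ combines into a single term proportional to $g^{kl}\mathcal{A}_{skl}g_{ij}$, with numerical factor $\tfrac{n-1}{2}$. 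Dividing by $(n-2)$ and isolating $\nabla_j\nabla_iu$ produces (\ref{E-E9}). The converse follows because each step is an equivalence.

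The main obstacle is exactly this bookkeeping in $I_{11}+I_{12}$: one has to use the total symmetry of $\mathcal{A}_{ijk}$ and the $y$-homogeneity encoded in $\partial(F^2g^{rs}-2y^ry^s)/\partial y^q$ to see that the several Cartan contractions with disparate coefficients collapse onto a single term carrying the precise factor $\tfrac{n-1}{2n(n-2)F}$ stated in (\ref{E-E9}). Everything else is substitution and indexing.
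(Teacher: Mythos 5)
Your proposal follows essentially the same route as the paper: it sets $\widetilde{\textbf{E}}_{\widetilde{F}}^H=\textbf{E}_F^H$ in (\ref{122b}), evaluates at $(\partial_i,\hat{\partial}_j)$ to reach (\ref{005b1q}), uses the cancellations $I_{14}=-I_{13}$ and $I_{16}=-I_{15}$ to discard the $\Theta$-terms, and collapses $I_{11}+I_{12}$ into the single Cartan contraction with coefficient $\tfrac{n-1}{2n(n-2)F}$ --- which is exactly the computation the paper carries out before stating the proposition. The only point to watch is your implicit identification $\Delta^Hu=\nabla^d\nabla_du$ with a plus sign; matching the signs of (\ref{E-E9}) requires the opposite (geometer's) convention $\Delta^Hu=-\nabla^d\nabla_du$, but this is a convention issue the paper itself leaves tacit rather than a gap in your argument.
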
 
% \section{Locally conformally $R$-Einstein metrics on a cylinder of dimension $n\geq3$}\label{section3.4}
        \subsection{Warped product of Finslerian metrics}\label{section3.2}

Let $\stackrel{\oldstylenums{1}}{M}$ and $\stackrel{\oldstylenums{2}}{M}$ be two $C^{\infty}$ manifolds. 
For every $(x_1,x_2)\in \stackrel{\oldstylenums{1}}{M}\times \stackrel{\oldstylenums{2}}{M}$, we have the following properties.
% derived from $\stackrel{\oldstylenums{1}}{M}$ and $\stackrel{\oldstylenums{2}}{M}$.
\begin{itemize}
 \item [(1)] The projections 
 \begin{eqnarray}
  \stackrel{\oldstylenums{1}}{p}&:&\stackrel{\oldstylenums{1}}{M}\times \stackrel{\oldstylenums{2}}{M}\longrightarrow \stackrel{\oldstylenums{1}}{M}
  \text{		such that		} \stackrel{\oldstylenums{1}}{p}(x_1,x_2)=x_1\nonumber\\
   \stackrel{\oldstylenums{2}}{p}&:&\stackrel{\oldstylenums{1}}{M}\times \stackrel{\oldstylenums{2}}{M}\longrightarrow \stackrel{\oldstylenums{1}}{M}
  \text{		such that		} \stackrel{\oldstylenums{2}}{p}(x_1,x_2)=x_2\nonumber
 \end{eqnarray}
are $C^{\infty}$ submersions.
%  \item [(ii)] The tangents spaces 
%   $T_{(x_1,x_2)}\stackrel{\oldstylenums{1}}{M}$ and $ T_{(x_1,x_2)}\stackrel{\oldstylenums{2}}{M}$ are subspaces of 
%   $T_{(x_1,x_2)}(\stackrel{\oldstylenums{1}}{M}\times \stackrel{\oldstylenums{2}}{M})$.
  \item[(2)] $dim(\stackrel{\oldstylenums{1}}{M}\times \stackrel{\oldstylenums{2}}{M})=dim 
  \stackrel{\oldstylenums{1}}{M}+ dim \stackrel{\oldstylenums{2}}{M}$.
\end{itemize}

The warped product manifold of two Finslerian manifolds is defined as follows. 
\begin{definition}\label{defi11c}
Let $(\stackrel{\oldstylenums{1}}{M},\stackrel{\oldstylenums{1}}{F})$ and
 $(\stackrel{\oldstylenums{2}}{M},\stackrel{\oldstylenums{2}}{F})$ be two Finslerian manifolds. 
 Let $f$ be a positive $C^{\infty}$ function on $\stackrel{\oldstylenums{1}}{M}$. 
 The warped product of $(\stackrel{\oldstylenums{1}}{M},\stackrel{\oldstylenums{1}}{F})$ and
 $(\stackrel{\oldstylenums{2}}{M},\stackrel{\oldstylenums{2}}{F})$ is a manifold 
 $M=\stackrel{\oldstylenums{1}}{M}\times_f \stackrel{\oldstylenums{2}}{M}$ equipped with the Finslerian metric 
%  $F=\stackrel{\oldstylenums{1}}{F}\times_f\stackrel{\oldstylenums{2}}{F}$. More precisely, 
 \begin{eqnarray}\label{11c1}
  F:\mathring{T}\stackrel{\oldstylenums{1}}{M}\times \mathring{T}\stackrel{\oldstylenums{2}}{M}\longrightarrow \mathbb{R}^+
 \end{eqnarray}
 such that for any vector tangent $y\in T_xM$, 
 with $x=(x_1,x_2)\in M$ and $y=(y_1,y_2)$,
 \begin{eqnarray}\label{11c}
  F(x,y)=\sqrt{\stackrel{\oldstylenums{1}}{F^2}(x_1,\stackrel{\oldstylenums{1}}{p}_*y)+
  f^2(\stackrel{\oldstylenums{1}}{p}(x_1,x_2))\stackrel{\oldstylenums{2}}{F^2}(x_2,\stackrel{\oldstylenums{2}}{p}_*y)}
 \end{eqnarray}
  where 
 $\stackrel{\oldstylenums{1}}{p}$ 
 and $\stackrel{\oldstylenums{2}}{p}$ are respectively the projections of $\stackrel{\oldstylenums{1}}{M}\times \stackrel{\oldstylenums{2}}{M}$ 
 onto $\stackrel{\oldstylenums{1}}{M}$ and 
 $\stackrel{\oldstylenums{2}}{M}$. 
\end{definition}

The function $F$ defined in (\ref{11c1}) and (\ref{11c}) is a Finslerian manifold. More precisely,
\begin{itemize}
 \item[(i)]$F$ is $C^{\infty}$ on 
 $\mathring{T}\stackrel{\oldstylenums{1}}{M}\times \mathring{T}\stackrel{\oldstylenums{2}}{M}$ since $\stackrel{\oldstylenums{1}}{F}$
 and $\stackrel{\oldstylenums{2}}{F}$ are respectively 
 $C^{\infty}$ on $\mathring{T}\stackrel{\oldstylenums{1}}{M}$ and $\mathring{T}\stackrel{\oldstylenums{2}}{M}$.
 \item [(ii)]$F$ is homogeneous of degree $1$ in $y=(y_1,y_2)\in T_xM$.
 \item [(iii)]If $n_1$ and $n_2$ are respectively the dimensions of $(\stackrel{\oldstylenums{1}}{M},\stackrel{\oldstylenums{1}}{F})$
 and $(\stackrel{\oldstylenums{2}}{M},\stackrel{\oldstylenums{2}}{F})$,
  each element of the Hessian matrix $(g_{ij}(x,y))_{1\leq i,j\leq n_1+n_2}$ of $\frac{1}{2}F^2$,
  has the following form:
 \begin{eqnarray}\label{}
  g_{ij}(x,y)&:=&\frac{\partial^2\left[\frac{1}{2}F^2(x,y)\right]}{\partial y^i\partial y^j}\nonumber\\
%    &=&\frac{1}{2}\frac{\partial^2\left[\stackrel{\oldstylenums{1}}{F^2}(x_1,y_1)+
%    f^2(x_1)\stackrel{\oldstylenums{2}}{F^2}(x_2,y_2)\right]}{\partial y^i\partial y^j}\nonumber\\
   &=&\frac{1}{2}\frac{\partial^2\stackrel{\oldstylenums{1}}{F^2}(x_1,y_1)}{\partial y_1^i\partial y_1^j}
   +\frac{1}{2}f^2(x_1)\frac{\partial^2\stackrel{\oldstylenums{2}}{F^2}(x_2,y_2)}{\partial y_2^i\partial y_2^j}.\nonumber
 \end{eqnarray}
 for every point $(x,y)=(x_1,x_2,y_1,y_2)\in \mathring{T}\stackrel{\oldstylenums{1}}{M}\times \mathring{T}\stackrel{\oldstylenums{2}}{M}$.
 Thus, %the fundamental tensor associated with $F$ is 
  \begin{eqnarray}\label{31cc}
\big(g_{ij}(x,y)\big)=
\left(\begin{array}{cc}
\big(\stackrel{\oldstylenums{1}}{g}_{ij}(x_1,y_1)\big) & 0 \\
 0  & \big(\stackrel{\oldstylenums{2}}{g}_{ij}(x_2,y_2)\big)
 \end{array} \right)
 \end{eqnarray}
 where $\stackrel{\oldstylenums{1}}{g}_{ij}(x_1,y_1):
 =\frac{1}{2}\frac{\partial^2\stackrel{\oldstylenums{1}}{F^2}(x_1,y_1)}{\partial y_1^i\partial y_1^j}$ and 
 $\stackrel{\oldstylenums{2}}{g}_{ij}(x_2,y_2):=
 \frac{1}{2}f^2(x_1)\frac{\partial^2\stackrel{\oldstylenums{2}}{F^2}(x_2,y_2)}{\partial y_2^i\partial y_2^j}.$ 
 So the Hessian matrix of $F$ is positive definite at every point
 $(x_1,x_2,y_1,y_2)\in \mathring{T}\stackrel{\oldstylenums{1}}{M}\times \mathring{T}\stackrel{\oldstylenums{2}}{M}$ 
 because the Hessian matrices of $\stackrel{\oldstylenums{1}}{F}$ and $\stackrel{\oldstylenums{2}}{F}$ are.
\end{itemize}
\subsection{Curvatures associated with warped product Finslerian metrics} 
\begin{pro}\label{pro2.1}
 Let $(\stackrel{\oldstylenums{1}}{M},\stackrel{\oldstylenums{1}}{F})$ and
 $(\stackrel{\oldstylenums{2}}{M},\stackrel{\oldstylenums{2}}{F})$ be two Finslerian manifolds. 
 On a warped product manifold $M=\stackrel{\oldstylenums{1}}{M}\times_f\stackrel{\oldstylenums{2}}{M}$, if 
 $\stackrel{\oldstylenums{1}}{\xi}\in\Gamma(\stackrel{\oldstylenums{1}}{\pi^*}T\stackrel{\oldstylenums{1}}{M})$, 
 $\stackrel{\oldstylenums{2}}{\xi}\in\Gamma(\stackrel{\oldstylenums{2}}{\pi^*}T\stackrel{\oldstylenums{2}}{M})$ and 
  $\stackrel{\oldstylenums{1}}{X}\in\chi(\mathring{T}\stackrel{\oldstylenums{1}}{M})$ then 
  \begin{itemize}
   \item [(i)]$\nabla_{\stackrel{\oldstylenums{1}}{X}}\stackrel{\oldstylenums{1}}{\xi}
   =\stackrel{\oldstylenums{1}}{\nabla}_{\stackrel{\oldstylenums{1}}{X}}\stackrel{\oldstylenums{1}}{\xi}$ where 
   $\stackrel{\oldstylenums{1}}{\nabla}$ is the Chern connection associated with $(\stackrel{\oldstylenums{1}}{M},\stackrel{\oldstylenums{1}}{F})$.
   \item [(ii)]$\nabla_{\stackrel{\oldstylenums{1}}{X}}\stackrel{\oldstylenums{2}}{\xi}
   =\frac{1}{f}\stackrel{\oldstylenums{1}}{X}(f)\stackrel{\oldstylenums{2}}{\xi}$.
  \end{itemize}
\end{pro}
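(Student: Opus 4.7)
The plan is to invoke the uniqueness of the Chern connection from Lemma \ref{lem1} and verify both claims by exhibiting a candidate connection on $\pi^*TM$ that satisfies symmetry and almost $g$-compatibility. I would first fix adapted local coordinates $(x_1^{i_1},x_2^{i_2},y_1^{i_1},y_2^{i_2})$ on the warped product and record that, by (\ref{31cc}), the fundamental tensor $g$, its inverse $g^{ij}$, and the Cartan tensor $\mathcal{A}$ are all block-diagonal: first-factor components are independent of $(x_2,y_2)$ and coincide with their $\stackrel{\oldstylenums{1}}{M}$-analogues, while second-factor components carry the conformal factor $f^2(x_1)$ in front of the corresponding quantities on $\stackrel{\oldstylenums{2}}{M}$, so that $g^{i_1 j_1}=\stackrel{\oldstylenums{1}}{g}{}^{i_1 j_1}$ and $g^{i_2 j_2}=f^{-2}\stackrel{\oldstylenums{2}}{g}{}^{i_2 j_2}$.

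The candidate connection $\widetilde{\nabla}$ would be defined block-wise: on pulled-back first-factor sections, $\widetilde{\nabla}_{\stackrel{\oldstylenums{1}}{X}}\stackrel{\oldstylenums{1}}{\xi}:=\stackrel{\oldstylenums{1}}{\nabla}_{\stackrel{\oldstylenums{1}}{X}}\stackrel{\oldstylenums{1}}{\xi}$ using the Chern connection of $\stackrel{\oldstylenums{1}}{F}$; on pulled-back second-factor sections, $\widetilde{\nabla}_{\stackrel{\oldstylenums{1}}{X}}\stackrel{\oldstylenums{2}}{\xi}:=\tfrac{1}{f}\stackrel{\oldstylenums{1}}{X}(f)\,\stackrel{\oldstylenums{2}}{\xi}$; together with the analogous prescriptions for derivation vectors tangent to $\mathring{T}\stackrel{\oldstylenums{2}}{M}$, and extended by linearity and the Leibniz rule. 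Verifying symmetry reduces on pure first-factor arguments to symmetry of $\stackrel{\oldstylenums{1}}{\nabla}$ and on mixed arguments to the bracket identity $[\stackrel{\oldstylenums{1}}{X},\partial_{k_2}]=0$ together with the cancellation of the $\tfrac{1}{f}\stackrel{\oldstylenums{1}}{X}(f)\stackrel{\oldstylenums{2}}{\xi}$ terms that appear symmetrically on both sides of the Koszul relation. Almost $g$-compatibility splits similarly: on first-factor sections it follows from the corresponding property of $\stackrel{\oldstylenums{1}}{\nabla}$, using $\mathcal{A}_{i_1 j_1 k_1}=\stackrel{\oldstylenums{1}}{\mathcal{A}}_{i_1 j_1 k_1}$ and the vanishing of mixed Cartan components; on second-factor sections it reduces to the elementary identity $\stackrel{\oldstylenums{1}}{X}(f^2)=2f\stackrel{\oldstylenums{1}}{X}(f)$, which precisely matches the two $\tfrac{1}{f}\stackrel{\oldstylenums{1}}{X}(f)\,g(\cdot,\cdot)$ contributions produced by $\widetilde{\nabla}$. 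Lemma \ref{lem1} then forces $\widetilde{\nabla}=\nabla$, and items (i) and (ii) follow at once.

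The main obstacle I anticipate is the horizontal framing. The operators $\delta/\delta x^i$ and the splitting (\ref{decomposition}) on the warped product are built from the nonlinear connection $N^i_j=\Gamma^i_{jk}y^k$ of $F$, which is itself defined through the Christoffel symbols we are trying to identify. A clean argument therefore requires showing that the induced horizontal distribution on $\mathring{T}M$ decomposes as $\stackrel{\oldstylenums{1}}{\mathcal{H}}\oplus\stackrel{\oldstylenums{2}}{\mathcal{H}}$ up to an $f$-dependent shift supported in the second block. This self-consistency is precisely what the uniqueness part of Lemma \ref{lem1} delivers, but keeping the Cartan-tensor block structure and the nonlinear-connection identifications straight is the bookkeeping that makes the proof more than a transcription of the Riemannian warped-product formula.
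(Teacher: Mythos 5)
The paper states Proposition \ref{pro2.1} without giving any proof (it passes directly to Corollary \ref{propdgeh}), so there is no argument of the authors to compare yours against; I can only judge your plan on its own terms. Your skeleton --- write down a block-wise candidate connection, verify symmetry and almost $g$-compatibility, and invoke the uniqueness of Lemma \ref{lem1} --- is the natural Finslerian analogue of the O'Neill computation and is the right overall strategy.

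Two points, however, are genuine gaps rather than bookkeeping. First, the identification $\mathcal{A}_{i_1j_1k_1}=\stackrel{\oldstylenums{1}}{\mathcal{A}}_{i_1j_1k_1}$ is false as written: with the normalization $\mathcal{A}=\frac{F}{2}\frac{\partial g_{ij}}{\partial y^k}dx^i\otimes dx^j\otimes dx^k$ of Lemma \ref{lem1}, the first-block components of $\mathcal{A}$ carry the prefactor $F=\sqrt{\stackrel{\oldstylenums{1}}{F}{}^2+f^2\stackrel{\oldstylenums{2}}{F}{}^2}$, not $\stackrel{\oldstylenums{1}}{F}$; the discrepancy only cancels because $\mathcal{A}$ enters compatibility through $\mathcal{A}(\theta(X),\cdot,\cdot)$ and $\theta$ in (\ref{03b}) carries a compensating $1/F$. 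You need to say this, or the reduction of almost $g$-compatibility to the first factor does not go through. Second, and more seriously, you correctly flag the circularity in the horizontal framing but then assert that the uniqueness in Lemma \ref{lem1} ``delivers'' the self-consistency. It does not: the nonlinear connection of the warped metric is determined by $F$ through its spray, independently of the linear connection you are guessing, and it must actually be computed. Doing so shows that $N^{i_1}_{j_1}=\stackrel{\oldstylenums{1}}{N}{}^{i_1}_{j_1}$ but that the off-diagonal blocks do not vanish --- one finds $N^{i_2}_{j_1}=f^{-1}(\partial_{j_1}f)\,y^{i_2}$ and a nonzero $N^{i_1}_{j_2}$ coming from the term $\stackrel{\oldstylenums{1}}{g}{}^{i_1l_1}(\partial_{l_1}f^2)\,\stackrel{\oldstylenums{2}}{F}{}^2$ in the spray --- so the horizontal distribution is not $\stackrel{\oldstylenums{1}}{\mathcal{H}}\oplus\stackrel{\oldstylenums{2}}{\mathcal{H}}$ and the shift is not ``supported in the second block.'' The relation $N^{i_2}_{j_1}=\Gamma^{i_2}_{j_1k_2}y^{k_2}$ with $\Gamma^{i_2}_{j_1k_2}=f^{-1}(\partial_{j_1}f)\,\delta^{i_2}_{k_2}$ is precisely where item (ii) comes from, so this computation is the content of the proposition and cannot be deferred to an appeal to uniqueness.
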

As a direct consequence, we have
\begin{cor}\label{propdgeh}
 Let $(\stackrel{\oldstylenums{1}}{M},\stackrel{\oldstylenums{1}}{F})$ and
 $(\stackrel{\oldstylenums{2}}{M},\stackrel{\oldstylenums{2}}{F})$ be two Finslerian manifolds. 
 On a warped product manifold $M=\stackrel{\oldstylenums{1}}{M}\times_f\stackrel{\oldstylenums{2}}{M}$, if 
 $\stackrel{\oldstylenums{1}}{\xi},\stackrel{\oldstylenums{1}}{\eta}\in\Gamma(\stackrel{\oldstylenums{1}}{\pi^*}T\stackrel{\oldstylenums{1}}{M})$, 
  $\stackrel{\oldstylenums{1}}{X}, \stackrel{\oldstylenums{1}}{Y}\in\chi(\mathring{T}\stackrel{\oldstylenums{1}}{M})$ and
  $\stackrel{\oldstylenums{2}}{X}\in\chi(\mathring{T}\stackrel{\oldstylenums{2}}{M})$ then 
  \begin{itemize}
   \item [(i)]$\textbf{R}(\stackrel{\oldstylenums{1}}{\xi},\stackrel{\oldstylenums{1}}{\eta},
   \stackrel{\oldstylenums{1}}{X}, \stackrel{\oldstylenums{1}}{Y})
   =\stackrel{\oldstylenums{1}}{\textbf{R}}(\stackrel{\oldstylenums{1}}{\xi},\stackrel{\oldstylenums{1}}{\eta},
   \stackrel{\oldstylenums{1}}{X}, \stackrel{\oldstylenums{1}}{Y})$.
   \item [(ii)]$\textbf{R}(\stackrel{\oldstylenums{1}}{\xi},\stackrel{\oldstylenums{1}}{\eta},
   \stackrel{\oldstylenums{2}}{X}, \stackrel{\oldstylenums{1}}{Y})=0$.
   \end{itemize}
\end{cor}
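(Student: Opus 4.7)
The plan is to derive both identities as direct verifications from the definition of the first horizontal curvature $\textbf{R}$ together with Proposition~\ref{pro2.1}. Recall that
\[
 \textbf{R}(\xi,\eta,X,Y) \;=\; g\bigl(\nabla_{\hat X}\nabla_{\hat Y}\xi-\nabla_{\hat Y}\nabla_{\hat X}\xi-\nabla_{[\hat X,\hat Y]}\xi,\,\eta\bigr),
\]
so the strategy is to reduce, term by term, each $\nabla$ appearing on the right-hand side to a covariant derivative that either lives entirely on $\stackrel{\oldstylenums{1}}{M}$ or vanishes.

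For part (i), I would start by observing that if $\stackrel{\oldstylenums{1}}{X},\stackrel{\oldstylenums{1}}{Y}\in\chi(\mathring{T}\stackrel{\oldstylenums{1}}{M})$, then because the nonlinear connection $N^i_j$ of the warped product splits according to the block decomposition~(\ref{31cc}), the Ehresmann horizontal parts $\hat{\stackrel{\oldstylenums{1}}{X}}$ and $\hat{\stackrel{\oldstylenums{1}}{Y}}$ in $T\mathring{T}M$ coincide with the horizontal parts computed inside $T\mathring{T}\stackrel{\oldstylenums{1}}{M}$, and their Lie bracket remains in $\chi(\mathring{T}\stackrel{\oldstylenums{1}}{M})$. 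I would then apply Proposition~\ref{pro2.1}(i) iteratively: each $\nabla_{\stackrel{\oldstylenums{1}}{Z}}\stackrel{\oldstylenums{1}}{\xi}$ equals $\stackrel{\oldstylenums{1}}{\nabla}_{\stackrel{\oldstylenums{1}}{Z}}\stackrel{\oldstylenums{1}}{\xi}$ and, being again a section of $\stackrel{\oldstylenums{1}}{\pi^*}T\stackrel{\oldstylenums{1}}{M}$, can be differentiated again by the same rule. Finally, the block form~(\ref{31cc}) gives $g(\stackrel{\oldstylenums{1}}{\alpha},\stackrel{\oldstylenums{1}}{\beta})=\stackrel{\oldstylenums{1}}{g}(\stackrel{\oldstylenums{1}}{\alpha},\stackrel{\oldstylenums{1}}{\beta})$ for sections of $\stackrel{\oldstylenums{1}}{\pi^*}T\stackrel{\oldstylenums{1}}{M}$, so the four pieces assemble exactly into $\stackrel{\oldstylenums{1}}{\textbf{R}}(\stackrel{\oldstylenums{1}}{\xi},\stackrel{\oldstylenums{1}}{\eta},\stackrel{\oldstylenums{1}}{X},\stackrel{\oldstylenums{1}}{Y})$.

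For part (ii), the same expansion applies with $\hat{X}$ replaced by $\hat{\stackrel{\oldstylenums{2}}{X}}$. Since $\stackrel{\oldstylenums{2}}{X}\in\chi(\mathring{T}\stackrel{\oldstylenums{2}}{M})$ and $\stackrel{\oldstylenums{1}}{Y}\in\chi(\mathring{T}\stackrel{\oldstylenums{1}}{M})$ depend on disjoint sets of coordinates on $\mathring{T}\stackrel{\oldstylenums{1}}{M}\times\mathring{T}\stackrel{\oldstylenums{2}}{M}$, their Lie bracket vanishes, so the commutator term drops out. The content reduces to showing that $\nabla_{\stackrel{\oldstylenums{2}}{X}}\stackrel{\oldstylenums{1}}{\alpha}=0$ whenever $\stackrel{\oldstylenums{1}}{\alpha}$ is a section of $\stackrel{\oldstylenums{1}}{\pi^*}T\stackrel{\oldstylenums{1}}{M}$; this is the analogue of Proposition~\ref{pro2.1}(ii) in the opposite direction and follows immediately because the components of $\stackrel{\oldstylenums{1}}{\alpha}$ and the relevant Christoffel symbols depend only on $(x_1,y_1)$, so $\stackrel{\oldstylenums{2}}{X}$ annihilates them. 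Applying this twice — once to $\nabla_{\hat{\stackrel{\oldstylenums{1}}{Y}}}\stackrel{\oldstylenums{1}}{\xi}=\stackrel{\oldstylenums{1}}{\nabla}_{\hat{\stackrel{\oldstylenums{1}}{Y}}}\stackrel{\oldstylenums{1}}{\xi}$ and once to $\stackrel{\oldstylenums{1}}{\xi}$ itself — forces both $\nabla_{\hat{\stackrel{\oldstylenums{2}}{X}}}\nabla_{\hat{\stackrel{\oldstylenums{1}}{Y}}}\stackrel{\oldstylenums{1}}{\xi}$ and $\nabla_{\hat{\stackrel{\oldstylenums{1}}{Y}}}\nabla_{\hat{\stackrel{\oldstylenums{2}}{X}}}\stackrel{\oldstylenums{1}}{\xi}$ to vanish, giving $\textbf{R}(\stackrel{\oldstylenums{1}}{\xi},\stackrel{\oldstylenums{1}}{\eta},\stackrel{\oldstylenums{2}}{X},\stackrel{\oldstylenums{1}}{Y})=0$.

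The only genuinely delicate point is verifying that the Ehresmann splitting of $T\mathring{T}M$ is compatible with the product structure, i.e.\ that $\mathcal{H}$ itself splits along the factors. This is what guarantees that horizontal parts of factor vector fields behave as expected and that the commutator terms simplify. Once this compatibility is recorded — it follows from the block-diagonal form~(\ref{31cc}) and the explicit formula $N^i_j=\Gamma^i_{jk}y^k$ applied to each block — the rest of the proof is a routine substitution into the curvature formula.
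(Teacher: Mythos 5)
Your part (i) follows the route the paper intends (iterate Proposition~\ref{pro2.1}(i) and use the block form~(\ref{31cc}) of $g$) and is essentially sound. Part (ii), however, contains a genuine error: the claim that $\nabla_{\stackrel{\oldstylenums{2}}{X}}\stackrel{\oldstylenums{1}}{\alpha}=0$ for every section $\stackrel{\oldstylenums{1}}{\alpha}$ of $\stackrel{\oldstylenums{1}}{\pi^*}T\stackrel{\oldstylenums{1}}{M}$ is false, and it is contradicted by the paper's own results. By the symmetry of the Chern connection (Lemma~\ref{lem1}(i)), for commuting fields one has $\nabla_{\stackrel{\oldstylenums{2}}{X}}\pi_*\stackrel{\oldstylenums{1}}{Y}=\nabla_{\stackrel{\oldstylenums{1}}{Y}}\pi_*\stackrel{\oldstylenums{2}}{X}$, and Proposition~\ref{pro2.1}(ii) evaluates the right-hand side as $\frac{1}{f}\stackrel{\oldstylenums{1}}{Y}(f)\,\pi_*\stackrel{\oldstylenums{2}}{X}$, which is nonzero whenever $\stackrel{\oldstylenums{1}}{Y}(f)\neq 0$. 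The underlying reason is that the mixed Christoffel symbols of the warped metric do not vanish ($g_{\alpha\beta}=f^2\stackrel{\oldstylenums{2}}{g}_{\alpha\beta}$ depends on $x_1$ through $f$), so ``the components depend only on $(x_1,y_1)$'' does not imply that the connection term drops out. Consequently the curvature operator $\phi(\hat{\stackrel{\oldstylenums{2}}{X}},\hat{\stackrel{\oldstylenums{1}}{Y}})\stackrel{\oldstylenums{1}}{\xi}$ is \emph{not} zero in general (it is the Finslerian analogue of the Riemannian warped-product term $\tfrac{1}{f}H^f(Y,\xi)\,V$), and your term-by-term annihilation cannot go through.

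The statement (ii) is nevertheless true, and the correct finish is an orthogonality argument rather than a vanishing argument: carrying out your expansion with the corrected covariant derivatives, every term of $\phi(\hat{\stackrel{\oldstylenums{2}}{X}},\hat{\stackrel{\oldstylenums{1}}{Y}})\stackrel{\oldstylenums{1}}{\xi}$ is proportional to $\pi_*\stackrel{\oldstylenums{2}}{X}$, i.e.\ lies in the second block of $\pi^*TM$, and the block-diagonal form~(\ref{31cc}) then gives $g(\,\cdot\,,\stackrel{\oldstylenums{1}}{\eta})=0$. You should treat the bracket term with the same care: the horizontal lift $\hat{\stackrel{\oldstylenums{1}}{Y}}=Y^i\bigl(\partial_i-N^j_i\frac{\partial}{\partial y^j}\bigr)$ involves mixed coefficients $N^{\gamma}_i$ ($\gamma$ a fibre index) built from the same nonvanishing mixed Christoffel symbols, so $[\hat{\stackrel{\oldstylenums{2}}{X}},\hat{\stackrel{\oldstylenums{1}}{Y}}]$ does not vanish merely because the two fields ``depend on disjoint coordinates''; its contribution must also be shown to land in the second block before being killed by $g(\,\cdot\,,\stackrel{\oldstylenums{1}}{\eta})$.
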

\subsection{Main results}
We prove the following.
\begin{theo}\label{klwlefh}
   Let $F$ and $\widetilde{F}$ be two Finslerian metrics 
   on a manifold $M$ of dimension $n\geq3$. A conformal deformation $\widetilde{F}$ of $F$, with $\widetilde{F}=\varphi^{-1}F$, 
   preserves the $R$-Einstein criterion if and only if :
   \begin{itemize}
    \item [(1)] $\varphi$ is constant if $M$ is (locally) closed with $\widetilde{\textbf{Ric}}_{\widetilde{F}}^H=\textbf{Ric}_F^H$.
    \item[(2)]$\varphi$ is everywhere non-constant %solution of the equation 
%  $$\nabla_j\nabla_i\varphi=\frac{1}{n\varphi}\Big[ \nabla^d\nabla_d{\varphi}
%                   -\frac{(n-1)}{2(n-2) F}\left(\nabla_r\varphi\nabla^{q}\varphi\right)\frac{\partial(F^2g^{rs}-2y^ry^s)}{\partial y^q}
%                   g^{kl}\mathcal{A}_{skl}\Big]g_{ij}$$ 
in a neighborhood $U$ of a point $x\in M$ if $(M,\varphi^{-1}F)$ is a Finslerian cylinder of the form 
$\Big((0,\varepsilon)\times \stackrel{\oldstylenums{2}}{M},\sqrt{t^2+(\varphi^{'}(t))^2\stackrel{\oldstylenums{2}}{F}^2}\Big)$,
%    $\sqrt{t^2+(\varphi^{'}(t))^2\stackrel{\oldstylenums{2}}{F}^2}$ for any $t\in (0,\varepsilon)$, 
with $\varphi$ depending only on $t\in (0,\varepsilon)$.
%    $\stackrel{\oldstylenums{2}}{F}$ is Finslerian metric, independent of $t$, on a manifold of dimension $n-1$.
   \end{itemize}
   \end{theo}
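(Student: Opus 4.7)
The starting point is Proposition \ref{cor10'}, which gives the necessary and sufficient PDE (\ref{E-E9}) on $u$ for preservation of the $R$-Einstein criterion. Writing the conformal factor as $\widetilde{F}=\varphi^{-1}F=e^{u}F$ with $u=-\ln\varphi$, I would substitute $\nabla_i u = -\varphi^{-1}\nabla_i\varphi$ and $\nabla_j\nabla_i u = -\varphi^{-1}\nabla_j\nabla_i\varphi + \varphi^{-2}\nabla_i\varphi\,\nabla_j\varphi$ into (\ref{E-E9}). The quadratic term $\nabla_i u\,\nabla_j u$ on the right-hand side of (\ref{E-E9}) matches the quadratic tail of $\nabla_j\nabla_i u$ on the left, so what survives is $-\varphi^{-1}\nabla_j\nabla_i\varphi$ on one side and only scalar multiples of $g_{ij}$ on the other. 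The whole equation therefore collapses to an Obata--Brinkmann identity
\[
 \nabla_j\nabla_i\varphi \;=\; \tfrac{1}{n}\bigl(\Delta^H\varphi\bigr)\,g_{ij},
\]
the coefficient of $g_{ij}$ being fixed by taking the trace; an algebraic consistency check also forces the Cartan scalar appearing in (\ref{E-E9}) to vanish wherever the equation holds.

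For assertion (1) I assume $M$ is (locally) closed and additionally $\widetilde{\textbf{Ric}}^H_{\widetilde F}=\textbf{Ric}^H_F$. This equality is stronger than $\widetilde{\textbf{E}}^H_{\widetilde F}=\textbf{E}^H_F$, so the full right-hand side of (\ref{122b}) must vanish identically, not merely modulo a multiple of $\underline{g}$. Taking the trace of this vanishing identity and invoking Lemma \ref{proShu} yields a scalar PDE of Liouville type $\Delta^H u+c_n\,\|\nabla u\|_g^{\,2}=0$ with $c_n\neq 0$ when $n\geq 3$. Integrating against the canonical Finslerian volume form on the closed $M$ kills the divergence term $\int_M\Delta^H u\,d\mathrm{vol}_F$ and leaves $\int_M\|\nabla u\|_g^{\,2}\,d\mathrm{vol}_F=0$; therefore $u$, and hence $\varphi$, is constant.

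For assertion (2) I assume $\varphi$ is non-constant on some neighborhood $U$ of $x\in M$. The reduced Obata equation above says that $\nabla\varphi$ is a horizontal gradient conformal field on $U$, so its integral curves form a smooth one-dimensional foliation transverse to the level hypersurfaces $\{\varphi=\mathrm{const}\}$. Let $\stackrel{\oldstylenums{2}}{M}$ denote one such leaf and $t$ the $\varphi$-parameter along the flow. Using Proposition \ref{pro2.1} and Corollary \ref{propdgeh}, the Chern connection on $U$ splits along this foliation as that of a warped product in the sense of Definition \ref{defi11c}. The radial projection of the Hessian identity produces an ODE whose solution prescribes the base term to be $t^{2}$ and the warp factor to be $\varphi'(t)^{2}$, giving exactly the cylinder $\Big((0,\varepsilon)\times\stackrel{\oldstylenums{2}}{M},\sqrt{t^{2}+(\varphi'(t))^{2}\stackrel{\oldstylenums{2}}{F}^{\,2}}\Big)$.

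The main obstacle is the anisotropic correction: the Cartan-tensor and mixed-curvature pieces in (\ref{E-E9}) and (\ref{122b}) have to be shown to assemble into scalar multiples of $g_{ij}$ so that the Obata reduction in the first paragraph actually goes through, and the classical Tashiro/Brinkmann foliation-by-level-sets argument must be reworked with respect to the Chern connection in order to guarantee that the induced structure on $\stackrel{\oldstylenums{2}}{M}$ is itself Finslerian. The hypothesis $n\geq 3$ and the splitting $T\mathring{T}M=\mathcal{H}\oplus\mathcal{V}$ are essential at both points.
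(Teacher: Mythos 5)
Your reduction of (\ref{E-E9}) to a Hessian equation of the form $\nabla_j\nabla_i\varphi=f\,g_{ij}$ is the same first step the paper takes (its equation (\ref{192019})), and your treatment of assertion (1) --- trace, then integrate $\Delta^Hu+c_n\|\nabla u\|_g^2=0$ over the closed manifold to kill the divergence term and force $u$ constant --- is a reasonable standalone reconstruction of the argument the paper simply outsources to \cite{biNibaruta1}. One caveat: you assert that ``an algebraic consistency check forces the Cartan scalar to vanish,'' whereas the paper keeps that term inside the scalar $f$; your trace argument would indeed force it to zero \emph{if} the anisotropic pieces of (\ref{E-E9}) really assemble into multiples of $g_{ij}$, but you yourself list that assembly as an unresolved ``obstacle,'' so this part of your reduction is asserted rather than proved.

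The genuine gap is in assertion (2), where you and the paper argue opposite implications and yours is the one left unproved. You try to show that the Hessian equation plus non-constancy of $\varphi$ \emph{forces} the cylinder structure, via a Brinkmann--Tashiro foliation by level sets of $\varphi$; but every hard step of that route --- that the level sets carry an induced Finslerian structure, that the Chern connection splits as a warped product along the foliation, and that the radial ODE pins the metric down to $\sqrt{t^2+(\varphi'(t))^2\stackrel{\oldstylenums{2}}{F}^2}$ --- is stated, not carried out, and you explicitly flag the reworking of Tashiro's argument for the Chern connection as an open obstacle. The paper does something much more modest and concrete: it \emph{assumes} the warped-product cylinder, computes the Chern coefficients $\Gamma^i_{\alpha n}$ directly using $g_{\alpha n}=0$ and $\delta g_{nl}/\delta x^{\alpha}=0$, and verifies $\nabla_t\nabla_t\varphi=\ddot\varphi\,g_{tt}$ and $\nabla_\alpha\nabla_\beta\varphi=\ddot\varphi\stackrel{\oldstylenums{2}}{g}_{\alpha\beta}$, so that (\ref{192019}) holds and the criterion is preserved. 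Your direction, if completed, would be the stronger and more interesting statement, but as written it is a program, not a proof; to match what the paper actually establishes you should replace the foliation sketch by the direct computation of the warped-product Hessian.
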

 \begin{proof}
  Let $\widetilde{F}:=e^uF$ with $e^u=\varphi^{-1}$ 
 be a conformal deformation of $F$. We can show that, for the conformal factor $\varphi$, the equation (\ref{E-E9}) takes the form 
 $\nabla_j\nabla_i\varphi= fg_{ij}$ 
 for some $f\in C^{\infty}(\mathring{T}M, \mathbb{R})$. Precisely, we have
\begin{eqnarray}\label{E-E10c2}
  \nabla_j\nabla_i{\varphi}
  &=&\frac{1}{n}\Big[ \nabla^d\nabla_d{\varphi}
                  -\frac{(n-1)}{2(n-2) F}\left(\nabla_r\varphi\nabla^{q}\varphi\right)\frac{\partial(F^2g^{rs}-2y^ry^s)}{\partial y^q}
                  g^{kl}\mathcal{A}_{skl}\Big]g_{ij}\nonumber\\
                 &=& fg_{ij}\label{192019}
         \end{eqnarray}
 where $f:=\frac{1}{n}\Big[ \nabla^d\nabla_d{\varphi}
                  -\frac{(n-1)}{2(n-2) F}\left(\nabla_r\varphi\nabla^{q}\varphi\right)\frac{\partial(F^2g^{rs}-2y^ry^s)}{\partial y^q}
                  g^{kl}\mathcal{A}_{skl}\Big]$.\\
%  \begin{itemize}
%   \item [(1)]

 $(1)$ If $\widetilde{F}=\varphi^{-1}F$ on $U\subseteq M$ closed and $\widetilde{\textbf{Ric}}_{\widetilde{F}}^H=\textbf{Ric}_F^H$ 
 then $\widetilde{\textbf{E}}_{\widetilde{F}}^H=\textbf{E}_F^H$. 
 As shown in \cite{biNibaruta1}, $\varphi$ is constant. 
 
 Conversely, set $\varphi=e^{-u}$.
 If $\varphi$ is constant then from the Proposition \ref{lemgrosb}, 
 $\widetilde{\textbf{E}}_{\widetilde{F}}^H=\textbf{E}_F^H$.

%    \item [(2)]
 $(2)$ Define $\varphi:(0,\varepsilon)\times \stackrel{\oldstylenums{2}}{M}\longrightarrow (0,\infty)$ by 
   $\varphi(t,p)=\varphi(t)$. Then 
   \begin{eqnarray}\label{104}
    \triangledown\varphi=\nabla_t\varphi\partial_t
   \end{eqnarray}
 and 
 \begin{eqnarray}\label{105}
   \nabla_t\triangledown\varphi\stackrel{(\ref{104})}{=}
   \nabla_t\nabla_t\varphi\partial_t+\nabla_t\varphi\nabla_t\partial_t=\ddot{\varphi}\partial_t.
   \end{eqnarray} 
 The relation (\ref{105}) shows that $\nabla_t\nabla_t\varphi=\ddot{\varphi}(t)g_{tt}$.\\
 
 Now we show that $\nabla_{\alpha}\nabla_{\beta}\varphi= f\stackrel{\oldstylenums{2}}{g}_{\alpha\beta}$ 
 for $\alpha,\beta=1,...,n-1$. We have
 \begin{eqnarray}
  \nabla_{\alpha}\triangledown\varphi&\stackrel{(\ref{104})}{=}&\nabla_{\alpha}\dot{\varphi}(t)\partial_t\nonumber\\
    &=&\dot{\varphi}\nabla_{\alpha}\partial_t\nonumber\\
    &=&\dot{\varphi}\sum_{i=1}^n\Gamma_{\alpha n}^i\partial_i, ~~~t=t^n\nonumber\\
    &=&\dot{\varphi}\sum_{i=1}^n\left\{\frac{1}{2}\sum_{l=1}^n\left[g^{il}
    \left(\frac{\delta g_{\alpha l}}{\delta x^n}+\frac{\delta g_{n l}}{\delta x^{\alpha}}
    -\frac{\delta g_{\alpha n}}{\delta x^l}\right)\partial_i\right]\right\}\nonumber\\
    &=&\frac{1}{2}\dot{\varphi}\sum_{l=1}^n\left\{\left[g^{ln}
    \left(\frac{\delta g_{\alpha l}}{\delta x^n}+\frac{\delta g_{n l}}{\delta x^{\alpha}}
    -\frac{\delta g_{\alpha n}}{\delta x^l}\right)\partial_n\right]\right.\nonumber\\
    &&\left.+\sum_{\beta=1}^{n-1}\left[g^{l \beta}
    \left(\frac{\delta g_{\alpha l}}{\delta x^n}+\frac{\delta g_{n l}}{\delta x^{\alpha}}
    -\frac{\delta g_{\alpha n}}{\delta x^l}\right)\partial_{\beta}\right]\right\}.\label{106}
 \end{eqnarray}
 Since $g_{\alpha n}=0=\frac{\delta g_{n l}}{\delta x^{\alpha}}$, we obtain from relation (\ref{106})
 \begin{eqnarray}
  \nabla_{\alpha}\triangledown\varphi&=&\frac{1}{2}\dot{\varphi}\sum_{l=1}^n
  \left[g^{l\beta}\left(\frac{\delta g_{\alpha l}}{\delta x^n}\right)\partial_{\beta}\right]\nonumber\\
%  &=&\frac{1}{2}\dot{\varphi}g_{l\beta}^{-1}\partial_t g_{\alpha l}\nonumber\\
 &=&\frac{1}{2}\dot{\varphi}\left\{\frac{1}{\dot{\varphi}^2}
 \stackrel{\oldstylenums{2}}{g}^{l \beta}\left[\frac{\partial (\dot{\varphi}^2\stackrel
 {\oldstylenums{2}}{g}_{\alpha l})}{\partial t}\right]\partial_{\beta}\right\}\nonumber\\
 &=&\ddot{\varphi}\partial_{\alpha}.
 \end{eqnarray}
 That is $\nabla_{\alpha}\nabla_{\beta}\varphi=\ddot{\varphi}\stackrel{\oldstylenums{2}}{g}_{\alpha\beta}, \alpha,\beta=1,...,n$.
 
 Conversely, if $\varphi$ is everywhere non-constant on $(M,F)$ and if $(M,\widetilde{F})$ is a Finslerian cylinder of the form 
$\Big((0,\varepsilon)\times \stackrel{\oldstylenums{2}}{M},\sqrt{t^2+(\varphi^{'}(t))^2\stackrel{\oldstylenums{2}}{F}^2}\Big)$ 
then the equation (\ref{192019}) holds. It follows from the Proposition \ref{lemgrosb} and by setting $e^u=\varphi^{-1}$ that 
 $\widetilde{\textbf{E}}_{\widetilde{F}}^H=\textbf{E}_F^H$.
%  \end{itemize}
 \end{proof}
 Thus, we claim.
 \begin{cor}\label{uieleied} A Finslerian $R$-Einstein space $(M,F)$ is locally conformal to an other $R$-Einstein space 
 $(M,\widetilde{F})$, with $\widetilde{F}=\varphi^{-1}F$, if and only if
   \begin{itemize}
    \item [(1)] the conformal factor is constant if $M$ is (locally) closed with $\widetilde{\textbf{Ric}}_{\widetilde{F}}^H=\textbf{Ric}_F^H$.
    \item[(2)]the conformal factor $\varphi$ is everywhere non-constant %solution of the equation 
%  $$\nabla_j\nabla_i\varphi=\frac{1}{n\varphi}\Big[ \nabla^d\nabla_d{\varphi}
%                   -\frac{(n-1)}{2(n-2) F}\left(\nabla_r\varphi\nabla^{q}\varphi\right)\frac{\partial(F^2g^{rs}-2y^ry^s)}{\partial y^q}
%                   g^{kl}\mathcal{A}_{skl}\Big]g_{ij}$$ 
in a neighborhood $U$ of a point $x\in M$ if $(M,\varphi^{-1}F)$ has the form 
$\Big((0,\varepsilon)\times \stackrel{\oldstylenums{2}}{M},\sqrt{t^2+(\varphi^{'}(t))^2\stackrel{\oldstylenums{2}}{F}^2}\Big)$,
%    $\sqrt{t^2+(\varphi^{'}(t))^2\stackrel{\oldstylenums{2}}{F}^2}$ for any $t\in (0,\varepsilon)$, 
with $\varphi$ depending only on $t\in (0,\varepsilon)$.
%    $\stackrel{\oldstylenums{2}}{F}$ is Finslerian metric, independent of $t$, on a manifold of dimension $n-1$.
   \end{itemize}
   \end{cor}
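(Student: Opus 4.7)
The plan is to deduce this corollary as a direct specialization of Theorem \ref{klwlefh}, exploiting the observation that when both $F$ and $\widetilde{F}$ satisfy the Einstein condition (\ref{EinstC1}), the associated trace-free horizontal Ricci tensors $\textbf{E}_F^H$ and $\widetilde{\textbf{E}}_{\widetilde{F}}^H$ both vanish identically and so in particular coincide, which is precisely the hypothesis under which Theorem \ref{klwlefh} was established.

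For the necessity direction, I would begin by assuming that $(M,F)$ and $(M,\widetilde{F})$ are two $R$-Einstein spaces conformally related by $\widetilde{F} = \varphi^{-1}F$. Definition \ref{defi4b}(2) immediately forces $\textbf{E}_F^H \equiv 0$ and $\widetilde{\textbf{E}}_{\widetilde{F}}^H \equiv 0$, hence $\widetilde{\textbf{E}}_{\widetilde{F}}^H = \textbf{E}_F^H$. Setting $e^u = \varphi^{-1}$ and passing through Proposition \ref{cor10'}, this places the pair $(F,\widetilde{F})$ inside the setting of Theorem \ref{klwlefh}, so the PDE (\ref{E-E10c2}) holds for the conformal factor $\varphi$. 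I would then invoke that theorem verbatim and read off the dichotomy: either $\varphi$ is constant on a (locally) closed piece of $M$ with $\widetilde{\textbf{Ric}}_{\widetilde{F}}^H = \textbf{Ric}_F^H$, or $\varphi$ is nowhere locally constant and $(M,\varphi^{-1}F)$ has the warped cylinder form $\Big((0,\varepsilon)\times \stackrel{\oldstylenums{2}}{M}, \sqrt{t^2+(\varphi'(t))^2 \stackrel{\oldstylenums{2}}{F}^2}\Big)$ described in item (2). These are exactly the two alternatives claimed.

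For the sufficiency direction I would treat the two cases separately. Case (1) is essentially tautological: a constant $\varphi$ means $\widetilde{F}$ is a homothetic rescaling of $F$, and both $\textbf{Ric}^H$ and the pointwise $R$-Einstein criterion are invariant under such rescalings, so the $R$-Einstein property transfers with equality of horizontal Ricci tensors. Case (2) is the one that needs actual work: I must verify that whenever $F$ is $R$-Einstein and $M$ carries the warped cylinder structure with $\varphi=\varphi(t)$, the deformed metric $\widetilde{F} = \varphi^{-1}F$ is again $R$-Einstein. Here I would invoke the warped-product curvature decomposition supplied by Proposition \ref{pro2.1} and Corollary \ref{propdgeh}, combine it with the identity $\nabla_\alpha \nabla_\beta \varphi = \ddot{\varphi}\, \stackrel{\oldstylenums{2}}{g}_{\alpha\beta}$ extracted from the proof of Theorem \ref{klwlefh}, and verify that the resulting $\widetilde{\textbf{Ric}}_{\widetilde{F}}^H$ is pointwise proportional to the conformal fundamental tensor, which is precisely (\ref{EinstC1}).

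The main obstacle I anticipate lies entirely in the cylinder case of sufficiency, because there one must reconcile the warped-product splitting of the horizontal curvature with the trace-type condition (\ref{E-E10c2}) to pin down the Einstein condition for $\widetilde{F}$ rather than merely the equality of the two trace-free Ricci tensors. Once that identification is made, Lemma \ref{proShu} guarantees that $\textbf{Scal}_{\widetilde{F}}^H$ is constant and closes the argument. The forward direction, by contrast, amounts to little more than rereading Theorem \ref{klwlefh} through the stronger hypothesis that both metrics are already $R$-Einstein.
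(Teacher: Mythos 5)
Your proposal is correct and follows essentially the same route as the paper, which offers no separate argument and simply remarks that the corollary is a particular case of Theorem \ref{klwlefh}; your key observation that both trace-free horizontal Ricci tensors vanish (hence coincide) when the two metrics are $R$-Einstein is exactly the specialization the paper intends. The extra work you anticipate in the cylinder case of sufficiency is not actually needed: since $\textbf{E}_F^H\equiv 0$ by the corollary's standing hypothesis, the theorem's conclusion $\widetilde{\textbf{E}}_{\widetilde{F}}^H=\textbf{E}_F^H$ already forces $\widetilde{\textbf{E}}_{\widetilde{F}}^H\equiv 0$, i.e.\ $\widetilde{F}$ is $R$-Einstein, without revisiting the warped-product curvature decomposition.
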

\begin{rem}
 The last corollary is a particular case of the Theorem \ref{klwlefh}.
\end{rem}
\section{Example}\label{section3.gg}
 Let $\varphi:(0,\pi)\longrightarrow (0,\infty)$ be a $C^{\infty}$ map such that $\varphi(t)=cos t+c$ with $c\in (1,\infty)$
 and, $\mathbb{S}^3$ and $\mathbb{S}^2$ 
 the unit spheres. Consider the warped product 
 $M=(0,\pi)\times_{\varphi'} \mathbb{S}^2$ and the map $i:(M,F)\longrightarrow (\mathbb{S}^3,F_0)$ 
 defined by $i(t,z^1,z^2)=(cost,z^1sint,z^2sint)$. We can show that $i$ is a diffeomorphism and a local isometry. 
 For $y=(y^1,y^2,y^3)\in\mathbb{S}^3$ and $z=(z^1,z^2)\in\mathbb{S}^2$, we have 
 $y=(cost,z^1sint,z^2sint)$. 
 Then, %$$\big(dy^1=-sintdt,dy^2=z^1costdt+sintdz^1,dy^3=z^1costdt+sintdz^1\big).$$
    \begin{equation}
   \left \{
    \begin{array}{r c l}
      dy^1&=&-sintdt\nonumber\\
    dy^2&=&z^1costdt+sintdz^1\nonumber\\
   dy^3&=&z^2costdt+sintdz^2.\nonumber
    \end{array}
    \right.
\end{equation}
The fundamental tensor associated with $F_0$ is 
 \begin{eqnarray}
  g_0(y)&=&\delta_{ij}dy^idy^j, \text{   with   } i,j=1,2,3\nonumber\\
  &=&sin^2tdt^2+(z^1)^2cos^2tdt^2+sin^2td{z^1}^2+2z^1costsintdtdz^1\nonumber\\
  &&+(z^2)^2cos^2tdt^2+sin^2td{z^2}^2+2z^1costsintdtdz^2\nonumber\\
  &=&sin^2tdt^2+\big((z^1)^2+(z^2)^2\big)cos^2tdt^2\nonumber\\
  &&+sin^2t\big(d{z^1}^2+d{z^1}^2\big)
  +2costsintdt\big(z^1dz^1+z^2dz^2\big)\nonumber\\
  &=&dt^2+sin^2t(d{z^1}^2+d{z^2}^2).\nonumber
 \end{eqnarray}
 Hence, by the formula (\ref{11c}), $F_0(y)=\sqrt{t^2+sin^2t\stackrel{\oldstylenums{2}}{F}^2}$ where 
 $\stackrel{\oldstylenums{2}}{F}$ is the Finslerian metric on $\mathbb{S}^2$.

% \noindent\\
% \textit{Authors' addresses:}\\
% \noindent
% ${^*}$Institut de Math\'{e}matiques et de Sciences Physiques, Postal zip code: 613,  Porto-Novo, B\'{e}nin\\
% \textit{~~~E-mail: nibarutag@gmail.com, nibarutagilbert@yahoo.fr}\\
% \textit{E-mail:leonardt@imsp-uac.org}\\
% \noindent
% ${^{**}}$Ecole Normale Sup\'erieure de Natitingou, Postal zip code: 72, Natitingou, B\'enin\\
% \textit{E-mail: sdegla@imsp-uac.org}
 \end{document}